\newtheorem{theorem}{Theorem}
\newtheorem{proposition}{Proposition}
\newtheorem{corollary}{Corollary}
\theoremstyle{definition}
\newtheorem{remark}{Remark}
\begin{document}


\title{Graph entropy, degree assortativity, and hierarchical structures in networks} 


\author{Fatihcan M. Atay}
\email[Author to whom correspondence should be addressed. ]{f.atay@bilkent.edu.tr}
\affiliation{Department of Mathematics, Bilkent University, 06800 Ankara, Turkey}

\author{T\"urker B{\i}y{\i}ko\u{g}lu}
\email[]{tbiyikoglu@gmail.com}
\affiliation{Mathematics Research Center, Azerbaijan State Oil and Industry University, 183 Nizami str.,
AZ1000 Baku, Azerbaijan}


\date{Preprint August 2025. Final version in Physical Review E, \href{https://doi.org/10.1103/6kxf-qn44}{https://doi.org/10.1103/6kxf-qn44}}

\begin{abstract}
We connect several notions relating the structural and dynamical
properties of a graph. Among them are the
topological entropy coming from the vertex shift, which is related
to the spectral radius of the graph's adjacency matrix, 
the Randi\'c index, and the degree assortativity.  
We show that, among all connected graphs with the same degree sequence,
the graph having maximum entropy is characterized by a hierarchical structure;
namely, it satisfies a breadth-first search ordering
with decreasing degrees (BFD-ordering for short). Consequently, the
maximum-entropy graph necessarily has high degree assortativity;
furthermore, for such a graph the degree centrality and eigenvector 
centrality coincide.
Moreover, the notion of assortativity is related to the 
general Randi\'c index. 
We prove that the graph that maximizes the Randi\'{c} index
satisfies a BFD-ordering.
For trees, the converse holds as well.
We also define a normalized Randi\'c function and show that its maximum value equals the difference of Shannon entropies of two probability distributions defined on the edges and vertices of the graph based on degree correlations. 
\end{abstract}

\pacs{}

\maketitle

\section{Introduction}

Since the beginning of the recent interest in complex networks, a major theme has been to understand the relations between various network descriptors and their connection to the network's function \cite{Boccaletti-review06,Newman-review03}. 
This is a daunting task since there is a large number of quantitative measures used in network studies, each one typically capturing only a particular aspect of the network. While correlations may be observed in numerical or experimental studies, rigorous relations between them may be difficult to establish or may even be nonexistent \cite{PhysD06}.
The aim of this article is to rigorously connect several network descriptors and concepts using mathematical tools.

The central notion of the paper is that of entropy,  
which plays a fundamental role in many areas of the
physical sciences in quantifying disorder and complexity, and as such
is a particularly relevant notion for studying complex systems and
networks. 
Topological graph entropy, by its definition, is a measure of the complexity of a certain dynamical system, the so-called vertex shift, on the graph \cite{Lind-Marcus}.
Algebraically, it can be expressed in terms of the logarithm of the spectral radius of the graph (the dominant eigenvalue of the adjacency matrix)
\cite{Cvetkovic;Doob;Sachs1995,Fujii96}, which in turn is related to several other types of dynamics on graphs,
as we briefly mention in the next section. 
Therefore, it is of practical interest to characterize high- or low-entropy graphs within a suitable class.
We are in particular interested in the class $\mathcal{C}_\pi$ of graphs having a given degree sequence $\pi$,
motivated partly by the well-known procedure of re-wiring the links in a network while keeping the vertex degrees fixed, as a means of investigating the relation between network structure and function.

Another central player of the paper, which at first sight seems unrelated to the concept of entropy, is the so-called \textit{Randi\'c index} $R$,
introduced in the 1970s to study chemical compounds \cite{Randic}, and subsequently generalized to a family of indices $R_\alpha$ parametrized by a real number $\alpha$ (e.g., \cite{Bollobas;Erdos1998,Li;Shi2008}). 
We will see that the topological entropy is bounded from below by $\log R_\alpha$ (up to a multiplicative constant), so that networks with a high Randi\'c index necessarily have higher entropy. More interestingly, the Randi\'c index, 
being essentially the sum of products of vertex degrees (weighted with an exponent $\alpha$) over the edges, is naturally related to degree-degree correlations in the graph. We obtain that \textit{assortative} networks, where high-degree vertices are more likely to be connected to other high-degree vertices \cite{Newman02}, have necessarily high entropy. Stated differently, low-entropy networks are \emph{disassortative}.

Our main contribution, however, is a much more detailed characterization of the structure of maximum-Randi\'c as well as maximum-entropy graphs having a given degree sequence. 
We give a description in terms of a  
the \textit{breadth-first-search algorithm},
which is well known  
from theoretical computer science
for applications like finding vertices in a connected component or the shortest path between two nodes  \cite{Knuth}.
We use a modified version taking vertex degrees into account, and 
say that a graph has \textit{BFD-ordering} if it satisfies a breadth-first search ordering with decreasing degrees, to be defined precisely in the paper. We prove that the graphs the having largest Randi\'c index or the largest entropy in the class $\mathcal{C}_\pi$ satisfy a BFD-ordering. Moreover, 
the eigenvector centrality measure in these extreme graphs is consistent with the degree centrality.
We give a stronger result for trees, proving that the property of BFD-ordering is both necessary and sufficient for a tree to have the largest Randi\'c index  or the largest entropy among all trees with a given degree sequence. 

Finally, we define a \emph{normalized Randi\'c function} that fits better with the notion of entropy. We show that at its maximum point, the normalized Randi\'c function equals the difference of the Shannon entropies of two natural probability measures defined on the edge and vertex sets of the graph. These results provide rigorous connections between the concepts of entropy, Randi\'c index, degree correlations, and the graph structure.

\section{Topological graph entropy}

We first recall some basic notions 
related to entropy and its relation to graphs, 
and introduce the notation.
Let $G=(V,E)$ be a finite connected
graph on $n$ vertices with vertex set $V$ and edge set
$E$.
Let $A(G)=[a_{ij}]$ denote the
adjacency matrix of $G$, where $a_{ij}$ equals
1 if there is an edge from vertex $i$ to $j$ and zero otherwise.
As a simple dynamical process on the graph, consider
walking along the edges. Labeling
the vertices by distinct symbols from some symbol set $\mathcal{S}$,
let $X$ denote the set of all infinite symbol sequences $\{\dots,s_{i-1},s_{i},s_{i+1},\dots\}$,
$s_{i}\in\mathcal{S}$, that can be obtained by such walks, 
and let $\sigma:X\to X$ be the shift map
that shifts each sequence to the left by one element,
that is, $(\sigma(x))(i) = x(i+1)$  $\forall i$ and $x\in X$. 
The pair $(X,\sigma)$ defines
a dynamical system called a \emph{shift space} over the alphabet $\mathcal{S}$ (also called a \emph{vertex
shift}) \cite{Lind-Marcus}. The diversity of symbol sequences that can be produced by
walks on $G$ gives a natural measure of the complexity of the dynamics
of $(X,\sigma)$. To quantify, let $B_{k}(X)$ be the set of all blocks
of length $k$ that are present in elements of $X$. The \emph{topological
entropy} for the system is defined as \cite{Sternberg-book}
\begin{equation}
	H(X)=\lim_{k\to\infty}\frac{\log|B_{k}(X)|}{k},
	\label{symbol-ent}
\end{equation}
where $|\cdot|$ denotes the cardinality of a set\footnote{$\log$ denotes logarithm in base 2; however, the choice of the base is not important for the discussion.}. 	
We also write
this quantity as $H(G)$, and call it the \emph{topological entropy
of the graph $G$}, since it arises directly from the graph structure
of $G$.  

It is well known that the walks on the graph $G$ can be enumerated
through its adjacency matrix. Indeed, the entries $[A^{k}]_{ij}$
of the $k$th power of $A(G)$ yield the number of walks of length
$k$ from vertex $i$ to $j$. Thus denoting 
$\mathbf{1}=(1,1,\dots,1)^{\top}\in\mathbb{R}^{n}$, the quantity 
$
	N_{k}=\langle \mathbf{1},A^{k}\mathbf{1} \rangle ,
$
called the \emph{Schwarz constant} of $G$, gives the number of walks of
length $k$. Furthermore, since $A(G)$ has nonnegative
entries, Perron-Frobenius theory asserts that
$A(G)$ has a nonnegative dominant
eigenvalue, denoted $\lambda(G)$, which equals its spectral radius, and one has 
\cite{Cvetkovic;Doob;Sachs1995,Fujii96}
\[
\lim_{k\to\infty}(N_{k})^{1/k}=\lambda(G).
\]
Taking logarithms and observing that $|B_k|=N_k$ in \eqref{symbol-ent}, we reach the fundamental relation
\begin{equation} \label{H}
	H(G)=\log\lambda(G).
\end{equation}

The combinatorial concept of graph entropy  
is related
to the Shannon entropy through a variational principle, which is the
analogue of the Gibbs variational principle in statistical mechanics.
Hence, instead of counting
all possible walks on the graph without assigning any particular rules
for the walk, we consider a stochastic process by assigning a probability
distribution $\{p_{ij}\}$ to the outgoing edges of each vertex $i$.
Thus, $p_{ij}\ge0$ and $\sum_{j}p_{ij}=1$ for all $i\in V$, and
one obtains a Markov chain on $n$ states whose state transitions
are described by the stochastic matrix $P=[p_{ij}]$. 
Let $\pi$ denote its stationary distribution, $\pi P = \pi$.
The \emph{dynamical entropy} (also called the entropy rate or the entropy production) of $P$ is defined as \cite{Shannon48,Demetrius-Manke05} 
\begin{equation} \label{shannon}
	h(P)=-\sum_{i=1}^{n}\pi_{i}\sum_{j=1}^{n}p_{ij}\log p_{ij}.
\end{equation}
Note that the inner sum represents the classical Shannon entropy of the probability distribution $[p_{i1},\dots,p_{in}]$, and $h$ is the weighted average of these entropies over the states. 
Now, for a given graph $G=(V,E)$ on $n$ vertices, let $M_{G}$ denote all
compatible stochastic matrices $P$, in the sense that $p_{ij}=0$ if $ij \notin E$. 
In words, $M_{G}$ represents the set of all Markov processes
on $n$ states where the possible transitions are described by the
graph structure $G$. One can then show that \cite{Arnold94}
\begin{equation}
H(G)=\sup_{P\in M_{G}}h(P).
\label{gibbs}
\end{equation}
Furthermore, it can be shown that the supremum is achieved by the unique
stochastic matrix $P^{*}=[p_{ij}^{*}]$ defined by
\begin{equation} \label{P*}
	p_{ij}^{*}=\frac{a_{ij}f_{j}}{\lambda(G)f_{i}},
\end{equation}
where $f=(f_{1},\dots,f_{n})$ is the \emph{Perron vector} of $A(G)$, that is, the unique positive eigenvector corresponding
to the eigenvalue $\lambda(G)$ and satisfying $\sum_i f_i = 1$. The graph entropy is then the entropy
calculated by (\ref{shannon}) for the process $P^{*}$. Using \eqref{P*}
in \eqref{shannon} yields
\[ 
	H(G)=h(P^{\ast})=\log\lambda(G), 
\]
which is the same as \eqref{H}. Thus, the information-theoretic description
based on the variational principle \eqref{shannon}--\eqref{gibbs}
is equivalent to the combinatorial definition \eqref{symbol-ent}
of the graph entropy.
It is worth noting the important role played by the Perron vector $f$ in characterizing the 
maximum-entropy
process $P^*$.
Later we will use the Perron vector to characterize the maximum-entropy graphs within the set of all graphs having the same degree sequence.

In addition to the Markov chain formulation, there are other important examples that connect graph entropy to the dynamical processes on the graph,
especially when the adjacency matrix appears in the equations of motion (or their linearization). 
For example, in a recent discrete-time Boolean network model of genetic control \cite{Pomerance09}, the
stability of the steady-state against small perturbations
is described by the equation 
$\mathbf{y}(t+1)=qA\mathbf{y}(t)$
where $y$ is the vector of perturbations and $q\in[0,1]$ is related to the ``bias'' of the random output function
(here assumed to be the same for each vertex).
The unperturbed state
is thus asymptotically stable if the dominant eigenvalue 
of the matrix $qA$ is less than 1, 
or, by \eqref{H}, 
$
	H(G)+\log q<0.
	\label{stab}
$
Hence, a smaller value of entropy 
indicates a more robust system, in the sense that external perturbations 
die down more rapidly or a wider range of
values for $q$ can be allowed without introducing instability.
Similar considerations apply to population dynamics, for instance 
to age-structured population models that go back to \cite{Leslie45}. 
Here, the so-called
\emph{Leslie matrix} $A$ describes the movements between age groups
through births and aging,
and its dominant eigenvalue $\lambda$ corresponds to the asymptotic growth
rate of the population. For further information on the entropic interpretation
of population dynamics, the reader is referred to \cite{Arnold94,Demetrius04}.
An extension of the evolutionary ideas of population dynamics to general networks is given in \cite{Demetrius-Manke05}, where,
using on an evolutionary model based on entropy as a selective criterion,
the authors give 
support for the hypothesis
that network entropy as defined by \eqref{H} is a quantitative measure
of robustness, and argue that the evolutionarily stable states of
evolved networks will be characterized by extremal values of graph entropy.

We note that there are other definitions of entropy for networks used in graph theory (e.g.~\cite{Simonyi-survey01,Braunstein2006}) and statistical physics (e.g.~\cite{Sole2004,Bianconi2009,Johnson2010,Anand2011,DeDomenico2016}).
The definition \eqref{H} we adopt here
is a natural choice due to its relation to dynamics as well as its close connection to several
other graph properties, 
as we explore in the following sections.
Motivated by the examples above, we will particularly be interested in the properties of maximum-entropy graphs.

\section{Entropy, spectral radius, and BFD-ordering}   \label{sec:bfd}

For the remainder of the paper we confine ourselves to simple, finite,
connected, undirected graphs.

The largest eigenvalue of the adjacency matrix, also called the 
\emph{spectral radius} 
of the graph, 
has several uses in network theory; for example,
has been related to the percolation
threshold and the appearance of a giant component \cite{Bollobas10}. 
A well-known fact from graph theory is that the spectral radius is 
non-decreasing with respect to adding edges to a graph. 
This is easily understood from the entropy interpretation of the vertex shift \eqref{symbol-ent},
but can also be proved alternatively (e.g.~\cite{Cvetkovic;Rowlinson:1990a}). 
Moreover, if $d_{\min}$, $d_{\mathrm{avg}}$, and $d_{\max}$ denote, respectively, the smallest, average, and largest
vertex degrees in a graph, then \cite{Cvetkovic;Rowlinson:1990a}
\begin{equation}	\label{deg-lambda}
	d_{\min}\le d_\mathrm{avg}\le\lambda(G)\le d_{\max}.
\end{equation}
Denoting the degree of vertex $v$ by $d_v$, a sequence $\pi=(d_{0},\ldots,d_{n-1})$
of nonnegative integers in non-increasing order is called \emph{degree
sequence} if there exists a graph $G$ with $n$ vertices for which the
$d_i$ are its vertex degrees.
The set of all connected graphs with degree sequence $\pi$ is denoted 
by $\mathcal{C}_{\pi}$. 

We introduce an ordering $\prec$ of the vertices $v_{0},\ldots,v_{n-1}$
of a graph with respect to breadth-first search as follows (see Fig.~\ref{fig:BFD}). Select
a vertex $v_{0}\in G$ with largest degree and create a sorted list
of vertices beginning with $v_{0}$; append all neighbors 
$v_{1},\ldots,v_{d_{v_{0}}}$
of $v_{0}$ sorted by decreasing degrees; then append all neighbors
of $v_{1}$ that are not already in this list; continue recursively
with $v_{2},v_{3},\ldots$ until all vertices of $G$ are processed.
In this way we build layers where each vertex $v$ in layer $i$ has
distance $i$ from root $v_{0}$. A vertex $v$ in layer $i$ is adjacent
to some vertex $w$ in layer $i-1$. We call the least one (in the
above breadth-first search) the \emph{parent} of $v$ and $v$ a child
of $w$. Finally, this ordering $\prec$ is called \emph{breadth-first
search ordering with decreasing degrees} (\emph{BFD}-ordering for
short) if the following hold for all vertices $v,w\in V$: 
\begin{enumerate} \itemsep=0pt
	\item  if $w_{1}\prec w_{2}$, then $v_{1}\prec v_{2}$ for all children $v_{1}$ of $w_{1}$ and $v_{2}$ of $w_{2}$;
	\item  if $v\prec u$, then $d_v\geq d_u$. 
\end{enumerate} 
Thus, we begin with a largest degree on top, the vertex degrees 
are non-increasing as we go down the layers, and also in each layer the vertices are sorted in non-increasing
order (Fig.~\ref{fig:BFD}).

\begin{figure}
\centering
\includegraphics[scale=0.9]{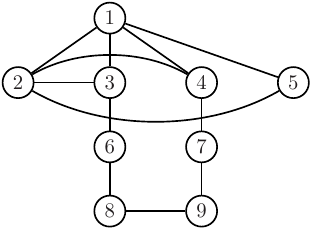} \\[6mm]
\includegraphics[scale=0.9]{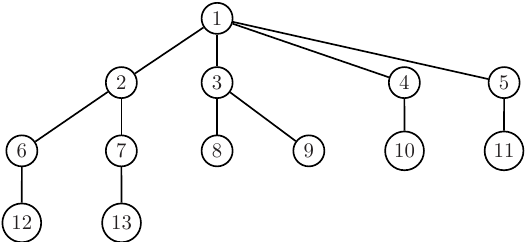}
\caption{\label{fig:BFD} Top: A BFD-ordering of the degree sequence $(4,4,3,3,2,2,2,2,2)$; 
Bottom: BFD-ordering of the tree sequence $(4,3,3,2,2,2,2,1,1,1,1,1,1)$.}
\end{figure}

We remark that for every degree sequence $\pi$, $\mathcal{C}_\pi$ has at least one graph with a BFD-ordering. This can be easily seen by the well-known result of Havel and Hakimi \cite{Havel,Hakimi}: 
A sequence of integers  $d_0, d_1\ge \cdots \ge d_n>0$, $n\ge 2$, is a degree sequence if and only if there exists a degree sequence with
$d_1-1,\ldots,d_{d_0+1}-1, d_{d_0+2},\ldots,d_n.$ Here we can choose any degree as $d_0$. By using the Havel-Hakimi condition, we can have BFD-ordering as described above, by choosing $v_0$ as the vertex with largest degree $d_0$ at the beginning and for further steps by choosing $d_1,\ldots,d_{d_0+1}$ in order, and so on. 

With this notation we can now describe the structure of the
graph having the largest entropy in a class $\mathcal{C}_{\pi}$.
Our characterization is based on the necessary condition obtained by
B\i y\i ko\u{g}lu and Leydold \cite{Biyikoglu;Leydold2008}
for a connected graph
to have the maximum spectral radius for a given degree sequence, and
the relation \eqref{H} between the spectral radius and entropy. 

\begin{theorem}[\cite{Biyikoglu;Leydold2008}]    \label{theorembfd}
	Let $G$ be the graph having the largest spectral radius 
	(and hence the largest entropy) in the class $\mathcal{C}_{\pi}$.
	Then there exists a BFD-ordering of $V(G)$ that is consistent with
	its Perron vector $f$ in such a way that $f(u)>f(v)$ 
	implies $u\prec v$ and hence $d_u\geq d_v$. 
\end{theorem}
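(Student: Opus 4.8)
The plan is to exploit the variational characterization $\lambda(G)=\max_{\|x\|=1}x^\top A(G)x=f^\top A(G)f$, where $f>0$ is the Perron vector of a maximizer $G\in\mathcal{C}_\pi$, together with the fact that a degree-preserving rewiring of $G$ stays in $\mathcal{C}_\pi$ precisely when it keeps the graph connected. First I would fix the order $\prec$ by sorting the vertices in non-increasing order of $f$, so that $f(u)>f(v)$ forces $u\prec v$ by definition; the whole task then reduces to showing that this order is degree-monotone and can be organized into breadth-first layers obeying conditions~1 and~2 of the BFD definition.

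The engine of the proof is the degree-preserving \emph{2-switch}: given edges $uz,vw\in E$ with $uw,vz\notin E$, deleting $uz,vw$ and inserting $uw,vz$ yields a graph $G'$ with the same degree sequence. Using $f$ as a trial vector gives
\[
\lambda(G')\ge \frac{f^\top A(G')f}{f^\top f}=\lambda(G)+\frac{2\,(f(u)-f(v))(f(w)-f(z))}{f^\top f}.
\]
Hence, whenever the switch preserves connectivity, maximality of $\lambda(G)$ forces the sign condition $(f(u)-f(v))(f(w)-f(z))\le 0$. Read together with its symmetric variants and with the exact eigenvalue identity $\lambda f(u)=\sum_{x\sim u}f(x)$, this inequality is what pins down the adjacency pattern of the maximizer relative to the $f$-order.

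From here I would extract degree monotonicity, $f(u)>f(v)\Rightarrow d_u\ge d_v$, and the nested structure simultaneously, by an induction sweeping the vertices in the $\prec$-order (a breadth-first sweep from the top vertex $v_0$, which has largest $f$ and hence, by monotonicity, largest degree). For the monotonicity step one assumes the contrary and examines the exclusive neighborhoods $U=N(u)\setminus(N(v)\cup\{v\})$ and $W=N(v)\setminus(N(u)\cup\{u\})$; then $|W|>|U|$, while the eigenvalue identity yields $(\lambda+[u\sim v])(f(u)-f(v))=\sum_{z\in U}f(z)-\sum_{w\in W}f(w)>0$, and the sign condition over all pairs $(z,w)\in U\times W$ forces $\max_{w\in W}f(w)\le\min_{z\in U}f(z)$. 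These constraints are then combined to produce a strictly profitable rewiring, contradicting maximality; the same local inequalities, applied layer by layer, force the neighborhoods to nest consistently with $\prec$, which is exactly what realizes $\prec$ as a genuine BFS ordering and verifies condition~1 (children of a $\prec$-earlier parent precede children of a later one).

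The main obstacle is twofold and lives entirely in this last step, beyond what the Rayleigh estimate sees directly. First, a 2-switch can disconnect $G$, so every application must be accompanied by an argument — typically routing the switched edges through a fixed spanning structure, or composing two switches so that the surplus edge at $v$ is relocated to $u$ while the compensating degree change is transported elsewhere along a path — that restores connectivity without destroying the sign gain; this is where connectivity does the real work. Second, ties in $f$ must be broken compatibly across all layers so that a \emph{single} ordering $\prec$ simultaneously respects $f$, is degree-monotone, and satisfies both BFD conditions; managing these ties is what welds the local switch inequalities into one coherent global ordering, and it is precisely why the conclusion asserts the \emph{existence} of a BFD-ordering rather than its uniqueness.
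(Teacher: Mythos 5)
The paper itself offers no proof of this theorem---it is quoted verbatim from \cite{Biyikoglu;Leydold2008}---so the only internal benchmark is the closely parallel argument the paper gives for Theorem~\ref{theoremrandic} (BFD-ordering of the maximal-Randi\'c graph). Your overall strategy (Rayleigh quotient as a lower bound for $\lambda(G')$, degree-preserving 2-switches, sign condition $(f(u)-f(v))(f(w)-f(z))\le 0$ at the maximizer) is exactly the engine used there and in the cited reference, and your switch computation and the identity $(\lambda+[u\sim v])(f(u)-f(v))=\sum_{z\in U}f(z)-\sum_{w\in W}f(w)$ are both correct.

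The genuine gap is in the step you rely on most: deducing $d_u\ge d_v$ from $f(u)>f(v)$. The three constraints you extract---$|W|>|U|$, $\sum_{z\in U}f(z)>\sum_{w\in W}f(w)$, and $\max_{w\in W}f(w)\le\min_{z\in U}f(z)$---are mutually consistent and therefore cannot ``be combined to produce a strictly profitable rewiring.'' For instance, $U=\{z\}$ with $f(z)=10$ and $W=\{w_1,w_2\}$ with $f(w_1)=f(w_2)=1$ satisfies all three simultaneously, so no contradiction follows from them. Worse, the obstruction is structural: a 2-switch preserves the degrees of all four participating vertices, so no sequence of such switches can ever repair the assumed defect $d_u<d_v$; the contradiction must instead be manufactured from edges and vertices \emph{outside} the pair $\{u,v\}$ (in \cite{Biyikoglu;Leydold2008} this is done through the layer structure of the BFS tree and repeated ``neutral'' switches after which $f$ remains the Perron vector of the new maximizer, so that the process can be iterated). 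Your sketch does not identify where that extra leverage comes from, and the case $U=\emptyset$---which is the one case that does yield a direct contradiction from the eigenvalue equation alone---is not separated out. The two difficulties you flag at the end (preserving connectivity, and breaking ties in $f$ coherently across layers) are indeed the right ones, but they are only named, not resolved; the paper's proof of Theorem~\ref{theoremrandic} shows the kind of explicit four-case path argument needed to choose the switching partner $p$ so that connectivity survives, and an analogous argument is needed here. As it stands the proposal is a correct plan with the decisive lemma missing.
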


To gain some intuitive understanding
of the graph structure implied by Theorem \ref{theorembfd},
we first note the connection to 
the concept of \emph{vertex centrality}, that is, determining which vertices in the network are the more important ones \cite{Bonacich72}. Two of the most commonly-used vertex centrality measures are the \emph{degree centrality}, which quantifies the importance of a vertex by the number of its neighbors, and \emph{eigenvector centrality}, which orders the importance of the vertices according to the entries of the Perron vector of the adjacency matrix. 
Theorem~\ref{theorembfd} states that eigenvector centrality is consistent with degree centrality in maximum-entropy networks.

Furthermore, recall formula \eqref{P*} for the extremal Markov process $P^*$ whose entropy equals the topological entropy of the graph. 
On an undirected graph, 
the process $P^*$ maximizing information entropy 
has the property that 
\begin{equation*}
\frac{p^*_{uv}}{p^*_{vu}} = \left( \frac{f_v}{f_u} \right)^{\!\! 2}
\end{equation*}
where $f$ is the Perron vector. 
For the maximum-entropy graph $G$, Theorem \ref{theorembfd} implies that 
$d_v \ge d_u$ 
if 
$ p^*_{uv} > p^*_{vu}. $
Thus, the favored directions of state transitions for the process $P^*$ are from lower- to higher-degree vertices.

The converse of Theorem \ref{theorembfd} is in general not true, since the condition of having
a BFD-ordering is in general not sufficient for maximum spectral radius
\cite{Biyikoglu;Leydold2008}. On the other hand, for trees
the condition of BFD-ordering turns out to be both
necessary and sufficient, as we will see 
in Section~\ref{sec:trees}. 
In other words, there is a unique BFD-ordering tree with maximum entropy in the class 
of all trees having a given degree sequence.

\section{The Randi\'c index and degree assortativity}  \label{sec:randic}

Theorem~\ref{theorembfd} describes the structure of maximum-entropy graphs using the concept of BFD-ordering. In this section we will show that the BFD-ordering also describes the structure of graphs that are maximal with respect to another graph measure, the so-called \emph{Randi\'c index}. This allows us to connect the concepts of graph entropy and Randi\'c index. Furthermore, we show that the latter is related to degree assortativity, thus bringing all these graph measures together.   

The \emph{Randi\'{c} index $R_{\alpha}(G)$} of a graph $G$
is defined by 
\begin{equation}  \label{randic}
R_{\alpha}(G)=\sum_{vw\in E}(d_{v}d_{w})^{\alpha}
\end{equation}
for a real parameter $\alpha$, where the sum is over the edges of the graph. Milan Randi\'{c} \cite{Randic}
originally proposed the index for $\alpha=-1$ and $\alpha=-1/2$
for measuring
the extent of branching of the carbon-atom skeleton of saturated hydrocarbons.
The case $\alpha=1$ is called the
\emph{second Zagreb index.} Bollob\'{a}s and Erd\H{o}s generalized
the index to $\alpha\in\mathbb{R}$ \cite{Bollobas;Erdos1998}. There
are several applications of the general Randi\'{c} index
related to medical and pharmacological problems \cite{Li;Shi2008}. 
In the following, we investigate its relations 
to entropy and degree correlations.

\subsection{Relation to Renyi and Tsallis entropies}

For a probability distribution $\{p_{i} \}_{i=1}^n$,
the Renyi entropy of order $\alpha$ is defined as
$$
H_{\alpha}=\frac{1}{1-\alpha}\log\left(\sum_{i=1}^{n}p_{i}^{\alpha}\right)
$$
for $\alpha\neq1$ \cite{Renyi61}. The limit $\lim_{\alpha\to1}H_{\alpha}$ exists and is equal to the Shannon entropy $-\sum_{i} p_{i}\log p_{i}$. 
Consider now a probability measure on the edge set $E$ given by
\begin{equation}
p_{E}(uv):=\frac{(d_{u}d_{v})}{\sum_{uv\in E}(d_{u}d_{v})}=\frac{(d_{u}d_{v})}{R_{1}}.\label{eq:pe}
\end{equation}
The associated Renyi entropy 
can be expressed in terms of the Randi\'c index as
\[
H_{\alpha}=\frac{1}{1-\alpha}\log\sum_{uv\in E}\frac{(d_{u}d_{v})^{\alpha}}{R_{1}^{\alpha}}=\frac{1}{1-\alpha}\log\left(\frac{R_{\alpha}}{R_{1}^{\alpha}}\right).
\]

Similarly, the Tsallis $q$-entropy of the distribution $\{p_{i}\}_{i=1}^n$ 
is defined as
$$
S_{q}=\frac{1}{q-1}\left(1 - \sum_{i=1}^n p_{i}^{q}\right)
$$
for $q\neq1$ \cite{Tsallis88}. 
The limit $\lim_{q\to 1}S_{q}$ exists and is equal to the Shannon entropy as before. 
For the probability distribution \eqref{eq:pe}, we have 
\[
S_{q}=\frac{1}{q-1}\left(1 - \frac{R_{q}}{R_{1}^{q}}\right).
\]

\subsection{Relation to graph entropy}

Our main aim is to relate the Randi\'c index to topological graph entropy $\log \lambda(G)$.
The next proposition shows that $\log R_\alpha$ essentially bounds $\log \lambda(G)=H(G)$ from below
for a given degree sequence.

\begin{proposition} \label{prop:randic-bound}
For any $\alpha \in \mathbb{R}$,
\begin{equation} \label{randic-bound}
	 \lambda(G)\ge \frac{2 R_\alpha (G)}{\sum_{v \in V} (d_v)^{2\alpha}}. 
\end{equation}
Consequently, within the class $\mathcal{C}_\pi$, graphs having a large Randi\'c index also have high topological entropy.
\end{proposition}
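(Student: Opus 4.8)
The plan is to derive \eqref{randic-bound} from the variational (Rayleigh-quotient) characterization of the spectral radius of a symmetric matrix. Since $A(G)$ is real symmetric, its largest eigenvalue satisfies $\lambda(G) = \max_{x \neq 0} \langle x, A(G)x\rangle / \langle x, x\rangle$, and in particular $\lambda(G) \ge \langle x, A(G)x\rangle / \langle x, x\rangle$ for every nonzero $x \in \mathbb{R}^n$. The whole proof then reduces to making one well-chosen substitution for $x$ and evaluating the two bilinear forms.

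The key step is to take the test vector with entries $x_v = d_v^{\alpha}$. This is well-defined and strictly positive for every real $\alpha$ because $G$ is connected with $n\ge 2$, so every degree satisfies $d_v \ge 1$; in particular $x \neq 0$. With this choice I would expand the numerator as $\langle x, A(G)x\rangle = \sum_{v,w} a_{vw}\, x_v x_w = 2\sum_{vw\in E}(d_v d_w)^{\alpha} = 2R_\alpha(G)$, where the factor $2$ arises because each undirected edge $vw$ is counted twice in the double sum (once as $a_{vw}$ and once as $a_{wv}$), and the last equality is just the definition \eqref{randic}. The denominator is immediately $\langle x, x\rangle = \sum_{v\in V}(d_v)^{2\alpha}$. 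Substituting these into the Rayleigh inequality yields exactly \eqref{randic-bound}.

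For the concluding assertion, I would observe that within the class $\mathcal{C}_\pi$ the degree sequence $\pi$ is fixed, so the denominator $\sum_{v\in V}(d_v)^{2\alpha}$ is a constant independent of which graph in $\mathcal{C}_\pi$ we pick. Hence \eqref{randic-bound} shows that $\lambda(G)$ is bounded below by a fixed positive multiple of $R_\alpha(G)$; taking logarithms and invoking \eqref{H} gives $H(G) = \log\lambda(G) \ge \log R_\alpha(G) + c_\pi$ for a constant $c_\pi$ depending only on $\pi$. Thus a large Randi\'c index forces high topological entropy, as claimed.

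I do not expect a genuine analytic obstacle here: the only nonroutine ingredient is recognizing that the degree-power vector $x_v = d_v^{\alpha}$ is the natural test vector that makes the numerator collapse to $2R_\alpha(G)$. Once that choice is made, the bound is immediate from the minimax principle, and the only points requiring a word of care are the positivity of $x$ (guaranteed by connectedness) and the bookkeeping of the factor $2$ from the symmetry of $A(G)$.
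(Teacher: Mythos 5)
Your proof is correct and follows essentially the same route as the paper: both apply the Rayleigh-quotient characterization of $\lambda(G)$ with the test vector $f(v)=(d_v)^{\alpha}$, collapse the numerator to $2R_\alpha(G)$, and note that the denominator is fixed on $\mathcal{C}_\pi$ for the concluding assertion. No gaps.
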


\begin{proof}
Consider the Rayleigh quotient on functions $f:V(G) \to \mathbb{R}$ for the largest eigenvalue: 
\begin{align*}  
	\lambda(G) = \max_{f\neq 0}
	\frac{\langle Af,f\rangle}{\langle f,f\rangle}
	&=\max_{f\neq 0} \frac{\sum_{v\in V}f(v)\sum_{uv\in E}f(u)}{\sum_{v\in V}f(v)^{2}} \nonumber \\
	&=\max_{f\neq 0} \frac{2\sum_{uv\in E}f(u)f(v)}{\sum_{v\in V}f(v)^{2}}.
\end{align*}
Choosing $f$ as $f(u)=(d_u)^\alpha$ establishes \eqref{randic-bound}. The second statement of the proposition follows by noting that the denominator of \eqref{randic-bound} is fixed for a given degree sequence.
\end{proof}

\begin{remark}
The expression \eqref{randic-bound} becomes an equality for certain graphs, so it cannot, in general, be improved further. We give two examples, namely, regular graphs and complete bipartite graphs. For a $d$-regular graph $G$, it is well known that the spectral radius is $\lambda(G)=d$. Moreover, 
$R_\alpha(G)=|E(G)|d^{2\alpha}$ and $\sum_{v \in V} (d_u)^{2\alpha} = d^{2\alpha} |V(G)|$, so the right-hand side of 
\eqref{randic-bound} becomes $ 2 |E(G)|/|V(G)| = d $ as well.
Hence, for regular graphs, \eqref{randic-bound} holds as  equality for any $\alpha$.
The second example is complete bipartite graphs 
$K_{s,t}$: It is well known that $\lambda(K_{s,t})=\sqrt{st}$ (see e.g. \cite{Cvetkovic;Doob;Sachs1995}). 
One partition of $K_{s,t}$ has $s$ vertices, each having vertex degree $t$, and the other partition has $t$ vertices, each having degree $s$. Also, $|E(K_{s,t})|=st$.
Hence,
$R_\alpha(K_{s,t})=st(ts)^\alpha=(st)^{\alpha+1}$
and $\sum_{v \in V} (d_u)^{2\alpha} = s(t^{2\alpha})+t(s^{2\alpha})=st(t^{2\alpha-1}+s^{2\alpha-1})$.
Therefore, the right-hand side of \eqref{randic-bound} becomes 
$\frac{2(st)^\alpha}{t^{2\alpha-1}+s^{2\alpha-1}}$, 
which is equal to $\sqrt{st}$ for $\alpha=1/2$. 
Hence, for complete bipartite graphs, \eqref{randic-bound} holds as equality for $\alpha=1/2$.
\end{remark}
 
\begin{remark}  
Note that $R_0$ equals the number of edges in the graph;
so the right-hand side of \eqref{randic-bound}, evaluated at $\alpha=0$, is the average degree.
Thus, the lower bound  
$\lambda(G)\ge d_{\mathrm{avg}}$ in \eqref{deg-lambda} follows from \eqref{randic-bound} with $\alpha=0$. On the other hand, taking $\alpha=\frac{1}{2}$ yields the improved estimate 
\[  \lambda(G) \ge \frac{1}{|E|}\sum_{vw\in E}\sqrt{d_{v}d_{w}}.\]
\end{remark}

\subsection{Relation to degree assortativity and BFD-ordering}

Proposition \ref{prop:randic-bound} shows that, for a given degree sequence, graphs with high Randi\'c index $R_\alpha$ also have high entropy. 
Thus, a natural way to seek high-entropy graphs 
(having a given degree sequence) 
is by increasing $R_\alpha$. 
By \eqref{randic}, for $\alpha >0$, the summands that contribute to $R_\alpha$ the most are those for which $d_u$ and $d_v$ are both large over an edge $uv\in E$. 
Hence, a high value of Randi\'c index is expected for graphs whose high-degree vertices are connected to other high-degree vertices. Interestingly, this latter property has independently attracted attention in recent literature 
as characteristic of certain real-world networks.       
Indeed, in social networks vertices with high degree are often found to be adjacent to other high-degree vertices, a tendency referred to as \emph{assortative mixing} or \emph{assortativity}, whereas technological and biological networks typically exhibit \emph{disassortative mixing}, 
where high-degree vertices are more likely to be attached to low-degree vertices \cite{Newman03}. 
Thus, the Randi\'c indices $R_\alpha$ for $\alpha>0$ act as a set of measures for the degree assortativity of networks, with  $\alpha$ quantifying the relative weight assigned to higher degree correlations. 
We note that the measure denoted $s(G)$ in \cite{Doyle-PNAS2005} actually corresponds to a Randi\'c index with $\alpha=1$. 
It has been shown in \cite{Li-Doyle05,Li-Doyle05sup} 
that $s(G)$ is closely related to the assortativity measure $r$ used in \cite{Newman02}. 
We refer the reader to \cite[Section 5.4]{Li-Doyle05} and \cite{Li-Doyle05sup} for a detailed discussion of the connection between $r$ and $s(G)$ (and hence $R_1$).

Through the Randi\'c index, we have a means of connecting degree assortativity to graph entropy using Proposition \ref{prop:randic-bound}. Recalling that 
the denominator in \eqref{randic-bound} is fixed for a given degree sequence, 
we can state the result as follows.

\begin{corollary}
Among all networks having the same degree sequence, assortative networks have high entropy, or equivalently, low-entropy networks are necessarily disassortative.	
\end{corollary}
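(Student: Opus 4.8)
The plan is to chain together two facts already in hand. First, by \eqref{H} the entropy $H(G)=\log\lambda(G)$ is a strictly increasing function of the spectral radius, so it suffices to argue in terms of $\lambda(G)$. Second, Proposition~\ref{prop:randic-bound} gives, for any fixed $\alpha$, the bound $\lambda(G)\ge 2R_\alpha(G)/\sum_{v\in V}(d_v)^{2\alpha}$. The crucial observation is that the denominator depends only on the degree sequence $\pi$ and not on the particular wiring of the edges. Hence, restricting to the class $\mathcal{C}_\pi$, we may write $\lambda(G)\ge c_\pi R_\alpha(G)$ with $c_\pi=2/\sum_{v\in V}(d_v)^{2\alpha}>0$ a constant common to the whole class.

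It then remains to identify $R_\alpha$, for $\alpha>0$, as a quantitative measure of degree assortativity. I would take $\alpha=1$ for concreteness and appeal to the correspondence discussed after \eqref{randic}: the index $R_1$ coincides with the quantity $s(G)$ of \cite{Doyle-PNAS2005}, which by \cite{Li-Doyle05,Li-Doyle05sup} is monotonically (indeed affinely) related to Newman's assortativity coefficient $r$ of \cite{Newman02}. The reason the relation becomes affine once restricted to $\mathcal{C}_\pi$ is that $r$ is a Pearson correlation of the degrees at the two ends of an edge, and its normalizing moments (the mean and variance of endpoint degrees, expressible through $\sum_v d_v^2$ and $\sum_v d_v^3$) are fixed as soon as $\pi$ is fixed; only the cross term $R_1=\sum_{vw\in E}d_v d_w$ varies with the wiring. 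Thus, within $\mathcal{C}_\pi$, ``more assortative'' is synonymous with ``larger $R_1$.''

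Putting the two steps together proves the corollary and, simultaneously, its equivalent contrapositive form. A more assortative graph in $\mathcal{C}_\pi$ has a larger $R_1$, which through $\lambda(G)\ge c_\pi R_1(G)$ forces a larger spectral radius and hence a higher entropy; this is the first assertion. Reading the same inequality as $R_1(G)\le\lambda(G)/c_\pi$ gives the second: a graph of low entropy has small $\lambda(G)$, hence small $R_1$, hence low assortativity. I expect the only genuine subtlety to be definitional rather than computational: the corollary is qualitative, so the work lies in fixing a precise assortativity measure (here $r$, equivalently $R_1$) and confirming that it is monotone in $R_\alpha$ across $\mathcal{C}_\pi$. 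Once that identification is made, the single inequality of Proposition~\ref{prop:randic-bound} supplies both directions at once and no complementary bound is needed.
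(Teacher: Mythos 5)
Your argument is essentially identical to the paper's: it combines Proposition~\ref{prop:randic-bound} with the observation that the denominator $\sum_{v\in V}(d_v)^{2\alpha}$ is constant on $\mathcal{C}_\pi$, and then identifies $R_1$ with the $s(G)$-metric of \cite{Doyle-PNAS2005}, whose monotone relation to Newman's $r$ within a fixed degree sequence is exactly the correspondence the paper invokes via \cite{Li-Doyle05,Li-Doyle05sup}. The only difference is that you spell out why the relation between $r$ and $R_1$ becomes affine on $\mathcal{C}_\pi$, a detail the paper delegates to the cited references.
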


A question that naturally arises is, how to increase the Randi\'c index within $\mathcal{C}_\pi$, i.e., alter the degree assortativity without changing the degree sequence.
There are systematic ways of moving inside the class $\mathcal{C}_\pi$, for example, by the \emph{edge switching} operation:
Recall that, if $xy,ab\in E$ are two edges of $G=(V,E)$
and $xa,yb\notin E$, switching refers to replacing the edges
$xy$ and $ab$ by $xa$ and $yb$, respectively, thus obtaining a
new graph $G'$ with the same degree sequence; see Fig.~\ref{fig:switch}.  
Now after such a switching operation, 
$R_\alpha$ changes by an amount $\Delta R_\alpha$ equal to
\[
  (d_x d_a)^\alpha + (d_y d_b)^\alpha - (d_x d_y)^\alpha - (d_a d_b)^\alpha
=(d_b^\alpha-d_x^\alpha)(d_y^\alpha-d_a^\alpha).
\]
Hence, for $\alpha>0$, $R_\alpha(G') < R_\alpha(G)$ whenever 
$d_y > d_a$ and $d_x > d_b$, or by obvious symmetry, $d_y < d_a$ and $d_x < d_b$. In other words, if the new neighbors of the dissolved pair $x,y$ have both higher or both lower degrees than the original neighbors $y,x$, the Randi\'c index decreases after the switching operation. Conversely, if one vertex of the dissolved pair finds a higher-degree neighbor while the other one finds a lower-degree neighbor, as compared their original neighbors, the Randi\'c index increases. 
An example is depicted in Fig.~\ref{fig:switch}.

\begin{figure}
\includegraphics[scale=0.5]{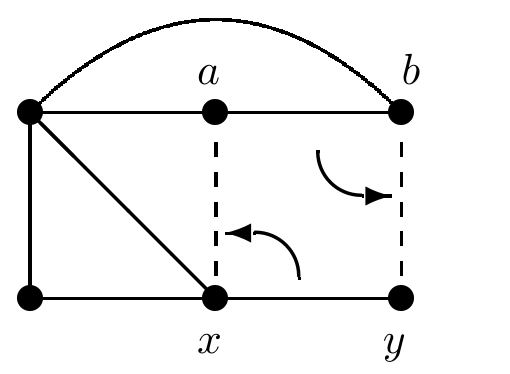}
	\caption{\label{fig:switch} An edge switching operation that increases the Randi\'c index without changing the degree sequence. Here, the edges $xy,ab$ are replaced by $xa,yb$. The reverse operation 
yields an example of an edge switch that decreases the Randi\'c index.}
\end{figure}

So, what does the graph having the largest Randi\'c index for a given degree sequence look like? 
Our next result shows that 
it has a BFD-ordering structure. 

\begin{theorem} \label{theoremrandic} 
	Let $\alpha>0$ and let $G$ be the graph with the largest Randi\'c 
	index $R_\alpha$ in the class $\mathcal{C}_{\pi}$.
	Then there exists a BFD-ordering of $V(G)$ that is consistent with
	its degree sequence $d=(d_{1},\ldots,d_{n})$ in such a way that $d_u>d_v$
	implies $u\prec v$. 
\end{theorem}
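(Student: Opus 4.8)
The plan is to show that the maximum-$R_\alpha$ graph admits a BFD-ordering by using the edge-switching machinery developed just above the theorem statement. The key observation is that the formula $\Delta R_\alpha = (d_b^\alpha - d_x^\alpha)(d_y^\alpha - d_a^\alpha)$ tells us precisely when a switch increases $R_\alpha$, and since $G$ is the \emph{maximizer} within $\mathcal{C}_\pi$, no switch can strictly increase $R_\alpha$. The strategy is therefore to take any BFD-ordering $v_0 \prec v_1 \prec \cdots \prec v_{n-1}$ produced by the Havel--Hakimi construction (which exists for every $\pi$, as noted in the excerpt) and argue that whenever a configuration violates the desired degree-consistency, a suitable switch would strictly increase $R_\alpha$, contradicting maximality. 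This gives a contradiction-based proof of the existence of the required ordering.

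First I would fix $G$ to be a maximizer and suppose, toward a contradiction, that no BFD-ordering of $V(G)$ is consistent with the degree sequence in the stated sense. I would then look for a ``local defect'': a pair of edges witnessing that high-degree vertices are not preferentially adjacent to high-degree vertices. Concretely, suppose there exist vertices with $d_y > d_a$ while $y$ is adjacent to some vertex $b$ of low degree and $a$ is adjacent to some vertex $x$ of high degree, arranged so that the switch replacing $\{xy, ab\}$ by $\{xa, yb\}$ has $\Delta R_\alpha > 0$. By the sign analysis following the switching formula, $R_\alpha$ strictly increases precisely when one endpoint gains a higher-degree neighbor while the other gains a lower-degree neighbor, so I would isolate such a configuration from any violation of the BFD property. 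The heart of the argument is a rearrangement lemma: one shows that among all graphs in $\mathcal{C}_\pi$ that are \emph{switching-stable} (no switch increases $R_\alpha$), the neighborhoods must be ``nested'' in the sense forced by a BFD-ordering.

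The main technical step I would carry out is establishing this nesting/consistency. Order the vertices by decreasing degree and argue layer by layer, as in the BFD construction: I would show that if $d_u > d_v$ but $u$ fails to precede $v$ (i.e.\ $v$ is placed ``above'' $u$ in the breadth-first layers or has a higher-degree neighborhood than $u$), then some edge incident to $v$ can be switched into one incident to $u$ to raise $R_\alpha$. This requires verifying that the needed non-edges $xa, yb \notin E$ actually exist, which is the usual subtlety in switching arguments; one typically handles it by choosing the switch among candidate edges so that the target pairs are non-adjacent, invoking a counting/pigeonhole argument when a direct choice fails. The conclusion is that the neighborhoods are totally ordered by inclusion-type dominance compatible with degrees, which is exactly the BFD-ordering condition (1)--(2) in the definition.

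I expect the main obstacle to be exactly this existence-of-a-valid-switch issue: guaranteeing that whenever a BFD-violation occurs one can find two edges whose endpoints are pairwise non-adjacent so that the switch is legal and strictly increases $R_\alpha$. Degenerate cases---where the natural switch target already contains an edge, or where $d_y = d_a$ makes $\Delta R_\alpha = 0$---need separate handling, typically by composing two switches or by a more careful choice of the defect. A secondary subtlety is that the theorem only claims \emph{existence} of a consistent BFD-ordering (not uniqueness), so I would be careful to phrase the argument as building one valid ordering rather than certifying all of them; the strict implication $d_u > d_v \Rightarrow u \prec v$ only constrains vertices of distinct degrees, leaving freedom to break ties among equal-degree vertices, which is what makes the existence statement attainable.
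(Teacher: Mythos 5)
Your overall strategy---use the switching formula $\Delta R_\alpha=(d_b^\alpha-d_x^\alpha)(d_y^\alpha-d_a^\alpha)$ to show that any violation of degree-consistency in a BFD-ordering can be repaired by a switch that does not decrease $R_\alpha$---is exactly the approach the paper takes. However, two points that you flag as ``obstacles'' or omit entirely are precisely where the paper's proof does its real work, and as written your proposal does not close them. First, \emph{connectivity}: $\mathcal{C}_\pi$ is the class of \emph{connected} graphs with degree sequence $\pi$, so a switch that disconnects the graph produces no contradiction with maximality. You never address this. The paper does: it chooses the second edge $pv$ of the switch by examining a simple path $P_{av}$ from $a$ to $v$ in $G$ and runs a four-case analysis on whether $ax$ and $xt$ lie on or off that path, guaranteeing the switched graph stays connected. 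Second, the \emph{equality case}: when $d_a=d_p$ the switch gives $\Delta R_\alpha=0$, and your contradiction framing (``assume no consistent BFD-ordering exists, derive a strictly increasing switch'') collapses, because a zero-gain switch contradicts nothing. You acknowledge this degenerate case but offer only ``composing two switches or a more careful choice of defect,'' which is not an argument. The paper resolves it by abandoning the one-shot contradiction: it performs the neutral switch anyway, observes that the resulting graph is still a maximizer and has strictly more vertices in the correct BFD position (a progress measure tied to choosing the \emph{largest} degree $d_v$ for which some $x\prec v$ has $d_x<d_v$), and iterates until a BFD-ordering is reached.

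A smaller but related gap: your ``rearrangement lemma'' asserting that switching-stable graphs have neighborhoods ``totally ordered by inclusion-type dominance'' is stronger than what is needed and is not something you prove; the paper instead works locally with the parent structure of the breadth-first search, using the observation that every neighbor $z$ of the misplaced vertex $v$ satisfies $d_z\le d_a$ (where $a$ is the parent of $x$), since otherwise $v$ would have been reached earlier in the BFS. That inequality is what pins down the sign of $\Delta R_\alpha$; without it, your claim that a BFD-violation always yields a non-decreasing switch is unsupported. So while the route is the right one, the proposal as it stands is a plan with the three load-bearing steps (sign control via the parent structure, connectivity preservation, and termination in the equality case) still missing.
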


\begin{proof}
Let $\alpha>0$ and let $G$ be the graph with the largest Randi\'c 
index $R_\alpha$ in the class $\mathcal{C}_{\pi}$.
Let $\pi=(d_0 \ge \cdots \ge d_{n-1})$ be the  non-increasing degree sequence of $G$. Create another graph $G'$ with the same degree sequence and BFD-ordering by breadth-first search, as described in Section~\ref{sec:bfd}:
Choose the largest degree of $\pi$ as root $v_0$ in layer 0; append all neighbors $v_1,\ldots, v_{d(v_0)}$ of $v_0$ to
the ordered list; these neighbors are ordered such that $u \prec v$ whenever $d_u > d_v$,
or $d_u = d_v$ (in the remaining case the ordering can be arbitrary).
Then continue recursively with all degrees $d_1,d_2,\ldots$ until all degrees of $\pi$ are processed.
This yields the graph $G'$ with BFD-ordering. We show that either $G$ has a BFD-ordering or we get another graph $G'$ by switching the related edges that $G$ has BFD-ordering and $R_\alpha(G)\le R_\alpha(G')$. In each recursive step we compare our graph $G$ and $G'$ if the ordering of is not right we switch with the proper edges and proof show that it is always possible. 
 
We have to show that $u\prec v$ implies $d_u\ge d_v$ for all $u,v \in V(G)$. 
Let $d_v$ be the largest degree in $\pi$ that exists a vertex $x$
with $x \prec v$ but $d_x < d_v$ in $G$. It means that for all degrees $d>d_v$, the vertices $s$ with $d_s>d_v$ are in the right order with respect to the ordering $\prec$.
Let $a$ and $b$ be parents of $x$ and $v$, respectively. If $a=b$, then $x$ and $v$ have the same parent and we can simply change their order and get a right order for $x$ and $v$. Now we can assume that $a$ and $v$ are not adjacent.
Firstly, observe that $d_z\le d_a$ for all neighbors $z$ of $v$; otherwise recursive BFD-ordering process must handle $v$  for a neighbor $z$ with $d_z > d_a$ as a 
\emph{v must have come before x  in the ordering}
parent for $v$  in an earlier time and it must $v\prec x$, a contradiction.\\
Secondly, $v$ has always a neighbor $p$ that is not adjacent to $x$, since $d_x<d_v$. Now 
we are ready to switch (replace) the edges $ax$ and $pv$ with edges $av$ and $xp$ and getting the graph $G'$ and $R_\alpha(G)\le R_\alpha(G')$, since $\Delta R_\alpha$ equals to 
$d_a^\alpha d_v^\alpha+d_x^\alpha d_p^\alpha-d_a^\alpha d_x^\alpha-d_p^\alpha d_v^\alpha =(d_a^\alpha -d_p^\alpha) (d_v^\alpha-d_x^\alpha)\ge 0$, where $p$ is neighbor of $v$ by above observation $d_p\le d_a$.\\
For the case $R_\alpha(G)<R_\alpha(G')$, we get a contradiction to be largest of $R_\alpha(G)$.
Observe that if $R_\alpha(G)=R_\alpha(G')$, then $d_a=d_p$ must hold. By recursive BFD-ordering construction and assumption, it holds $a\prec p$, otherwise process handle $v$ as parent of $p$ before $a$ and get $v\prec x$, a contradiction. Now it follows that $v\prec x$ and $x$ get a new ordering depending on $p$ in $G'$ and $G'$ has more vertices in the right order than $G$. So we can continue the process with $G'$.

It remains to show that after replacing the edges $ax$ and $vp$ with $av$ and $xp$, the graph $G'$ remains connected. 
Since $G$ is connected, there exists a simple
path $P_{av} =(a,\ldots,t, v)$ from $a$ to $v$. 
There are four cases to consider:\\
(1) If $ax \in P_{av}$ and $xt \notin E(G)$, then we set $p = t$.\\
(2) Else, if $ax \notin P_{av}$ and $xt \in E(G)$, then we set $p$ to one of the neighbors of $v$ that is not adjacent to $x$.\\
(3) Else, if $ax \in P_{av}$ and all neighbors not equal $t$ are adjacent to $x$, then $t$ cannot be adjacent to $x$ and we set $p = t$.\\
(4) Else, $ax \in P_{av}$ and there exists a neighbor $s$ of $v$, $s\not= t$, with $xs \notin  E(G)$. Then we set $p = s$.\\
In all cases $G'$ remains connected. 
This completes the proof.
\end{proof}

\begin{remark}
While Theorem~\ref{theoremrandic} gives a necessary condition, finding a sufficient condition for the maximization of Randi\'c index is still an open problem. 
Li et al.~\cite{Li-Doyle05} give a heuristic algorithm for a sufficient condition based on building a partial graph incrementally
in a greedy way. 
However, for some degree sequences this construction can result in a disconnected graph.
There do exist degree sequences,
called \emph{forcibly connected}, such that all graphs having this degree sequence are connected.
A few sufficient conditions are known for a degree sequence to be forcibly connected; 
we mention here a result due to Bondy \cite{Choudum91}: 
A degree sequence $(d_0,\ldots,d_{n-1})$ 
is forcibly connected if $d_{i} \ge n-i$ for every $i\ge d_0+2$. 
For a forcibly connected degree sequence, the heuristic algorithm of 
\cite{Li-Doyle05} computes the graph with maximum
Randi\'c index, and by Theorem~\ref{theoremrandic} this graph has a BFD-ordering.
However, 
for the special case of trees we are able to extend Theorem~\ref{theoremrandic} by providing also a sufficient condition, as presented in the next section.	
\end{remark}

It has been shown that the degree distribution
(or the degree sequence) of a graph does not determine the spectral gap of the graph Laplacian matrix \cite{Atay;Biyikoglu;Jost2006a}. Here we
also notice that in general, by using suitable switching operations,
we can get series of graphs with the same degree sequence and different
(dis)assortativity. Therefore, degree distribution and more generally
degree sequence do not in general suffice to determine degree correlations.

\section{Maximal-entropy and maximal-Randi\'c trees}  \label{sec:trees}

For the special case of trees, the property of BFD-ordering completely 
characterizes the property of having 
maximum entropy or maximum Randi\'c index for given degree sequence, 
thereby establishing a fundamental relationship between the two concepts. 
In the following, the notation 
$\mathcal{T}_\pi$ denotes the set of all trees having degree sequence $\pi$.

\begin{theorem}[\cite{Biyikoglu;Leydold2008}]  \label{thmtreemaxeig}
	A tree $G$ with degree sequence $\pi$ has the largest spectral radius
	in class $\mathcal{T}_{\pi}$ if and only if it is a BFD-tree. $G$
	is then uniquely determined up to isomorphism. The BFD-ordering is
	consistent with the Perron vector $f$ of $G$, in the sense that
	$f(u)>f(v)$ implies $u\prec v$. 
\end{theorem}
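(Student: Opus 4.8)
The plan is to prove both directions from one degree- and tree-preserving local move, reserving for combinatorics the rigidity that makes the BFD-tree unique. Note first that the necessary condition of Theorem~\ref{theorembfd} cannot simply be quoted here: the maximizer over all connected graphs $\mathcal{C}_\pi$ typically has more edges than any tree, so the maximizer over $\mathcal{T}_\pi$ is a different object and the argument must be redone inside $\mathcal{T}_\pi$. The move I would use is a \emph{subtree swap}. Given $T\in\mathcal{T}_\pi$ with Perron value $\lambda$ and positive Perron vector $f$, choose edges $uw$ and $vz$ for which the subtree $T_w$ hanging at $w$ away from $u$ and the subtree $T_z$ hanging at $z$ away from $v$ are vertex-disjoint, and replace $uw,vz$ by $uz,vw$. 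Every vertex keeps its degree, so the result $T'$ lies in $\mathcal{T}_\pi$ as soon as it is again a tree; feeding $f$ as a trial vector into the Rayleigh quotient from the proof of Proposition~\ref{prop:randic-bound} gives
\[
  \lambda(T')\ \ge\ \lambda\ +\ \frac{2\,\bigl(f(u)-f(v)\bigr)\bigl(f(z)-f(w)\bigr)}{\langle f,f\rangle}.
\]
Thus $\lambda$ does not decrease when the higher-$f$ subtree root is reattached to the higher-$f$ parent, and strictly increases when both factors are nonzero. This is the degree-preserving tree version of the edge switch of Section~\ref{sec:randic}.

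For the direction \emph{maximal $\Rightarrow$ BFD}, let $T^\ast$ attain the maximum spectral radius in $\mathcal{T}_\pi$ and root it at a vertex of largest $f$. I would order the vertices by breadth-first layers from this root, refined within each layer by decreasing $f$, and then show that extremality forces this ordering to obey the two BFD axioms of Section~\ref{sec:bfd} and to be $f$-consistent. Concretely, any violation---a higher-$f$ subtree attached below a lower-$f$ vertex, a child of an earlier vertex placed after a child of a later one, or the root failing to have largest $f$---produces a mismatched pair with $(f(u)-f(v))(f(z)-f(w))>0$, and the corresponding subtree swap strictly raises $\lambda$, contradicting maximality. This simultaneously yields the Perron-consistency clause $f(u)>f(v)\Rightarrow u\prec v$, and chaining it with BFD axiom~2 gives the degree monotonicity $f(u)>f(v)\Rightarrow u\prec v\Rightarrow d_u\ge d_v$.

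For uniqueness I would argue combinatorially that, for fixed $\pi$, the BFD structure admits only one tree up to isomorphism. Running the greedy Havel--Hakimi construction recalled in Section~\ref{sec:bfd} layer by layer---root of largest degree, then the remaining degrees attached in non-increasing order with children sorted by degree---fixes the size of every layer and the degree-sorted assignment of children at each step, leaving only the relabelling of equal-degree siblings; call the resulting canonical tree $B_\pi$. Combining with the previous paragraph, any maximal $T^\ast$ satisfies a BFD-ordering, hence $T^\ast\cong B_\pi$, so the maximum is attained by $B_\pi$ alone. Sufficiency (\emph{BFD $\Rightarrow$ maximal}) and the uniqueness statement then follow at once: every BFD-tree is isomorphic to $B_\pi\cong T^\ast$ and therefore maximal, and no non-BFD tree can be maximal.

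The crux lies in the necessity step, in two linked technical points. First, the subtree swap must be \emph{admissible}: removing $uw,vz$ and inserting $uz,vw$ must leave a connected, acyclic graph, which fails if the $u$--$v$ path runs through one of the deleted edges or if $T_w,T_z$ are nested. As in the proof of Theorem~\ref{theoremrandic}, I expect a short case analysis---on whether $u,v$ lie on a common root-path and on the location of the exchanged subtrees---to guarantee that an admissible improving swap always exists when a violation is present. Second, and harder, is upgrading ``does not decrease'' to ``strictly increases'' for a genuinely non-BFD maximal tree: when $f(u)=f(v)$ or $f(z)=f(w)$ the estimate gives only a tie, and one must rule out a non-BFD tree sharing the maximal $\lambda$ with $B_\pi$. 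Here I would exploit the strict positivity of the Perron vector on a tree together with a monotone violation potential, showing that tie-swaps strictly reduce the number of BFD-violations while preserving $\lambda$, so the process terminates at $B_\pi$ and forces $T^\ast\cong B_\pi$. Making this strictness argument airtight is the main obstacle.
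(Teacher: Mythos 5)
The paper does not actually prove this theorem: it is imported verbatim from \cite{Biyikoglu;Leydold2008}, so there is no in-paper argument to compare against. Judged on its own, your architecture is the right one and is essentially the route of the cited source: a degree-preserving two-edge switch evaluated on the Perron vector through the Rayleigh quotient, giving $\lambda(T')\ge\lambda(T)+2(f(u)-f(v))(f(z)-f(w))/\langle f,f\rangle$ (this identity is correct); necessity of a BFD-ordering for any maximizer; the combinatorial observation that for a fixed tree degree sequence the BFD-tree is unique up to isomorphism (the Havel--Hakimi-type layer-by-layer construction does pin it down); and then sufficiency and uniqueness for free, since a maximizer exists in the finite set $\mathcal{T}_\pi$ and must coincide with the unique BFD-tree. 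That ``necessity $+$ rigidity $\Rightarrow$ equivalence'' skeleton is exactly why the tree case admits an if-and-only-if while the general case in Theorem~\ref{theorembfd} does not.

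There are, however, two genuine gaps in the necessity step. First, your swaps detect violations measured in $f$-values only, but axiom~2 of the BFD definition is a statement about \emph{degrees}. From a violation $f(u)>f(v)$ with $d_u<d_v$ you cannot automatically extract an improving switch: you would need neighbors $w$ of $u$ and $z$ of $v$ (with $uz,vw\notin E$) satisfying $f(z)>f(w)$, and the degree deficit alone does not supply such a pair. Your remark that degree monotonicity follows by ``chaining with BFD axiom~2'' is circular, since axiom~2 is precisely what must be established for the $f$-ordering; a separate argument (in the tree case one exploits the eigenvalue equation $\lambda f(v)=\sum_{x\sim v}f(x)$ together with the already-established nesting of neighborhoods along the $f$-order) is needed and is missing. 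Second, the tie problem you flag is real, but the standard resolution should be stated: if the switch yields $\lambda(T')=\lambda(T)=\lambda_{\max}$, then $f$ attains the Rayleigh maximum for $A(T')$, hence $A(T')f=\lambda f$, and since $f>0$ and $T'$ is connected, $f$ is also the Perron vector of $T'$; one may therefore iterate with the \emph{same} vector $f$ while strictly decreasing a violation count. Without that observation your ``violation potential'' has no license to reuse $f$ after a tie-swap, and the strictness argument does not close.
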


One can also compare trees having different degree sequences.
To this end,  
let $\pi=(d_{0},\ldots,d_{n-1})$
and $\pi'=(d'_{0},\ldots,d'_{n-1})$, be two non-identical sequences in
non-increasing order. We write $\pi\lhd\pi'$ if  $\sum_{i=0}^{j}d_{i}\leq\sum_{i=0}^{j}d'_{i}$ for all
$ j=0,\ldots, n-1$. Such an ordering is sometimes called \emph{majorization}. 

\begin{theorem}[\cite{Biyikoglu;Leydold2008}] \label{thmmajor} 
	Let $\pi$ and $\pi'$ be two distinct degree sequences of trees with
	$\pi\lhd\pi'$, and let $G$ and $G'$ be trees having the largest spectral radii in classes $\mathcal{T}_{\pi}$ and $\mathcal{T}_{\pi'}$,
	respectively. Then $\lambda(G)<\lambda(G')$. 
\end{theorem}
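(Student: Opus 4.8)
The plan is to reduce the comparison of the two degree sequences to a chain of elementary one-step modifications and to show that each step strictly increases the maximal spectral radius over the corresponding class of trees. Since $G$ and $G'$ are trees on the same number $n$ of vertices, both degree sequences sum to $2(n-1)$, so $\pi\lhd\pi'$ is a majorization with equal totals. A standard fact about majorization then yields a finite chain $\pi=\pi^{(0)}\lhd\pi^{(1)}\lhd\cdots\lhd\pi^{(m)}=\pi'$ in which each $\pi^{(k+1)}$ arises from $\pi^{(k)}$ by a single \emph{transfer}: one entry of value $a$ is raised to $a+1$ and another entry of value $b\le a$ is lowered to $b-1$ (for instance, by acting at the leftmost index where $\pi^{(k)}$ falls short of $\pi'$ and the rightmost index where it exceeds $\pi'$). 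Because the total is preserved and the lowered entry satisfies $b\ge 2$, every intermediate sequence is again a list of $n$ positive integers summing to $2(n-1)$, hence again a tree degree sequence. It therefore suffices to prove $\lambda_{\max}(\sigma)<\lambda_{\max}(\sigma')$ for a single transfer $\sigma\lhd\sigma'$, where $\lambda_{\max}(\rho)$ denotes the largest spectral radius attained in $\mathcal{T}_{\rho}$.

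The main tool for a single step is an edge-relocation estimate. Let $T$ be a tree with Perron vector $g>0$ and spectral radius $\lambda(T)$, let $u,v$ be vertices with $g(u)\ge g(v)$, and let $w$ be a neighbor of $v$ with $uw\notin E(T)$ such that $u$ lies in the component of $T-vw$ containing $v$. Then $T':=T-vw+uw$ is again a tree, its degree sequence is that of $T$ with $d_u$ raised and $d_v$ lowered by one, and I claim $\lambda(T')>\lambda(T)$. Evaluating the Rayleigh quotient of $A(T')$ at $g$ gives
\[
\frac{\langle A(T')g,g\rangle}{\langle g,g\rangle}=\lambda(T)+\frac{2\,g(w)\,\big(g(u)-g(v)\big)}{\langle g,g\rangle}\ge\lambda(T),
\]
so $\lambda(T')\ge\lambda(T)$, with strict inequality whenever $g(u)>g(v)$. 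In the degenerate case $g(u)=g(v)$, an equality $\lambda(T')=\lambda(T)$ would force $g$ to be the Perron vector of $T'$ as well; but the eigenvalue equation at $u$ would then read $\lambda(T)g(u)=g(w)+\sum_{x\sim_T u}g(x)=g(w)+\lambda(T)g(u)$, forcing $g(w)=0$ and contradicting positivity of the Perron vector. Hence the relocation strictly increases the spectral radius in every case.

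To apply this to a transfer $\sigma\lhd\sigma'$, I would take $T$ to be the BFD-tree realizing $\lambda_{\max}(\sigma)$, which exists and is unique up to isomorphism by Theorem~\ref{thmtreemaxeig}, with Perron vector $g$ consistent with the BFD-ordering; in particular $d_x>d_y$ implies $g(x)\ge g(y)$. Choose $u$ to be a vertex whose degree $a$ is to be raised and $v$ a vertex whose degree $b\le a$ is to be lowered; when $a=b$, select as $u$ a degree-$a$ vertex with the larger $g$-value, so that $g(u)\ge g(v)$ in all cases. It remains to exhibit an admissible $w$: rooting $T$ at $u$, the vertex $v\neq u$ has a unique parent on the path toward $u$, and since $d_v=b\ge 2$ it has at least one child $w$, which satisfies $uw\notin E(T)$ and keeps $u$ in the component of $T-vw$ containing $v$. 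The relocation lemma then produces a tree $T'\in\mathcal{T}_{\sigma'}$ with $\lambda(T')>\lambda(T)=\lambda_{\max}(\sigma)$, whence $\lambda_{\max}(\sigma')\ge\lambda(T')>\lambda_{\max}(\sigma)$. Chaining these strict inequalities along $\pi^{(0)},\ldots,\pi^{(m)}$ gives $\lambda(G)=\lambda_{\max}(\pi)<\lambda_{\max}(\pi')=\lambda(G')$.

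The main obstacle is the single-transfer step, and within it two points need care. The first is guaranteeing that the relocation preserves connectedness; this is exactly what the rooting-at-$u$ choice of $w$ as a child of $v$ secures, and it is available precisely because a valid tree transfer never lowers a degree below one, so the lowered vertex has degree at least two. The second is obtaining \emph{strict} monotonicity rather than merely $\lambda(T')\ge\lambda(T)$, which is delicate when $g(u)=g(v)$; here the Perron-positivity contradiction above closes the gap. I expect the connectivity bookkeeping to be the more tedious part, while the strictness follows cleanly from the equality analysis of the Rayleigh quotient.
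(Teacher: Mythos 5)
The paper does not prove this theorem; it is quoted from \cite{Biyikoglu;Leydold2008}, and your reconstruction follows essentially the same strategy as that source: decompose the majorization into unit transfers, realize each transfer on the extremal BFD-tree by relocating one edge from a low--Perron-weight vertex to a high--Perron-weight one, and get strict growth of $\lambda$ from the Rayleigh quotient together with an equality analysis. Your relocation lemma is correct, including the delicate case $g(u)=g(v)$ (the eigenvalue equation at $u$ forcing $g(w)=0$ is the right way to close it), the choice of $w$ as a child of $v$ in the tree rooted at $u$ does guarantee both simplicity and connectedness, and the passage from Theorem~\ref{thmtreemaxeig} to $d_u>d_v\Rightarrow g(u)\ge g(v)$ is a valid contrapositive of the stated consistency.

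The one step that fails as written is the parenthetical recipe for building the transfer chain: acting at the leftmost index where $\pi^{(k)}$ falls short of $\pi'$ and the rightmost index where it exceeds $\pi'$ does not preserve the invariant $\pi^{(k)}\lhd\pi'$, and can subsequently force a transfer in the wrong direction. Concretely, take $n=10$, $\pi=(4,4,2,2,1,1,1,1,1,1)$ and $\pi'=(5,3,3,1,1,1,1,1,1,1)$; both are tree degree sequences with $\pi\lhd\pi'$ (partial sums $4,8,10,12,\dots$ versus $5,8,11,12,\dots$). Your recipe raises index $0$ and lowers index $3$, producing $(5,4,2,1,\dots)$, whose second partial sum $9$ exceeds the corresponding sum $8$ of $\pi'$; the next prescribed step would then have to raise a degree-$2$ entry while lowering a degree-$4$ entry, and for such a step the Perron-vector comparison $g(u)\ge g(v)$ is unavailable. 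The gap is repairable: the existence of a chain $\pi=\pi^{(0)}\lhd\cdots\lhd\pi^{(m)}=\pi'$ in which every step adds $1$ to an entry of value $a$ and subtracts $1$ from an entry of value $b\le a$, with every intermediate sequence squeezed between $\pi$ and $\pi'$ (hence, as you note, with all entries at least $\pi'_{n-1}\ge 1$ and therefore again a tree degree sequence), is the classical Muirhead/Hardy--Littlewood--P\'olya lemma for integer vectors, and in the example above the valid chain is $(4,4,2,2,\dots)\to(5,3,2,2,\dots)\to(5,3,3,1,\dots)$. You should cite that lemma rather than the leftmost/rightmost construction; with that substitution the proof is complete.
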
 

Theorem \ref{thmmajor} can be used to show that,
among all trees with $n$ vertices and $k$ leaves, the largest spectral radius belongs to star-like graphs%
\footnote{The kind of ``star-like'' graph mentioned here is 
obtained from a path
of $n-k+1$ vertices by adding $k-1$ leaves to the first vertex.}
with paths of almost the same lengths attached to each of its $k$ leaves \cite{Biyikoglu;Leydold2008}.

For the Randi\'c index, we have the following counterpart to Theorem~\ref{thmtreemaxeig}. It is a direct consequence of Theorem~\ref{theoremrandic}, and its proof is analogous to that of Theorem~\ref{thmtreemaxeig} in \cite{Biyikoglu;Leydold2008} 
(see also
\cite{Delorme;Favaron;Rautenbach2003}).

\begin{theorem} 
	Let $\alpha>0$. A tree $G$ with degree sequence $\pi$ 
	has the largest Randi\'c index $R_\alpha$ in class 
	$\mathcal{T}_{\pi}$
	if and only if it is a BFD-tree. $G$ is then uniquely determined
	up to an isomorphism.
\end{theorem}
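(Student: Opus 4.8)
The plan is to obtain both directions from Theorem~\ref{theoremrandic} together with a uniqueness statement for BFD-trees. The first observation is that when $\pi$ is the degree sequence of a tree, the sum of its entries is $2(n-1)$, so every graph in $\mathcal{C}_\pi$ has exactly $n-1$ edges; since a connected graph on $n$ vertices with $n-1$ edges is a tree, we have $\mathcal{C}_\pi=\mathcal{T}_\pi$. The necessity direction is then immediate: $\mathcal{T}_\pi$ is finite, so an $R_\alpha$-maximizer $G^\ast$ exists, and by Theorem~\ref{theoremrandic} it admits a BFD-ordering, i.e.\ $G^\ast$ is a BFD-tree.

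For the sufficiency direction and the uniqueness claim I would reduce everything to the single assertion that \emph{any two BFD-trees with the same degree sequence $\pi$ are isomorphic}. Granting this, sufficiency follows at once: if $G$ is an arbitrary BFD-tree in $\mathcal{T}_\pi$ and $G^\ast$ is a maximizer, then $G^\ast$ is a BFD-tree by necessity, so $G\cong G^\ast$, and since $R_\alpha$ is an isomorphism invariant we get $R_\alpha(G)=R_\alpha(G^\ast)=\max_{T\in\mathcal{T}_\pi}R_\alpha(T)$. The same assertion is precisely the uniqueness-up-to-isomorphism statement of the theorem.

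To show that a BFD-ordering determines the tree up to isomorphism, I would argue that the ordering is forced into a canonical form. Condition~2 of the BFD definition makes the degrees non-increasing along $\prec$, so the vertex in position $i$ must have degree $d_i$, the $i$-th entry of the sorted sequence $\pi$. The decisive use of the tree hypothesis is that in a tree a vertex at distance $i$ from the root has exactly one neighbor at distance $i-1$; hence every non-root vertex has a unique parent, and the vertex in position $i$ has exactly $d_i-1$ children while the root has $d_0$. Condition~1 then forces the children to appear as contiguous blocks arranged in the order of their parents. Processing $v_0,v_1,v_2,\dots$ in the order $\prec$ and appending each vertex's block of children reconstructs the parent function, where the parent of position $j$ is the unique $v_i$ whose block contains $j$; this reconstruction depends only on the numbers of children $(d_0,d_1-1,d_2-1,\dots)$ and hence only on $\pi$. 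Therefore two BFD-trees with the same $\pi$ have the same parent function in their respective orderings, and the position-matching bijection $v_i\mapsto v_i'$ is a graph isomorphism.

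The main obstacle is this canonical-form argument, and in particular the bookkeeping required when $\pi$ has repeated entries: vertices of equal degree may be permuted within a layer, and one must check that every such choice yields the same tree up to isomorphism, as well as verify rigorously that conditions~1 and~2 together with the breadth-first layering leave no freedom beyond these permutations. This is exactly where the tree hypothesis is indispensable---for a general graph a vertex may have several neighbors in the previous layer, the parent is no longer canonically determined, and BFD-ordering is only necessary, as already noted after Theorem~\ref{theoremrandic}. Since the corresponding statement for the spectral radius is established in this way in Theorem~\ref{thmtreemaxeig} and \cite{Biyikoglu;Leydold2008}, I expect the same combinatorial argument to transfer, with $R_\alpha$ replacing $\lambda$.
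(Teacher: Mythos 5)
Your proposal is correct and follows essentially the same route as the paper, which derives the theorem from Theorem~\ref{theoremrandic} (necessity) together with the rigidity of BFD-trees used in the proof of Theorem~\ref{thmtreemaxeig} in \cite{Biyikoglu;Leydold2008} (sufficiency and uniqueness). Your observation that $\mathcal{C}_\pi=\mathcal{T}_\pi$ for a tree degree sequence and your canonical-form argument---each non-root position $i$ has a unique parent and exactly $d_i-1$ children, so the parent function on positions is forced by $\pi$---are precisely the details the paper leaves to the cited reference.
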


\section{The normalized Randi\'c function} \label{sec:norm-Randic}

In this section we indicate further relations between the Randi\'c index and the concept of entropy. To this end, we also define a normalized version of the Randi\'c index.

So far, when discussing the Randi\'c index $R_\alpha$ 
and its relations to graph entropy and degree assortativity, 
the precise value of $\alpha$ was not specified. 
While most of the observations hold for any $\alpha$,
it is not clear whether a particular value of $\alpha$ has a special meaning.
Even the whole set $R_\alpha$, regarded as a function of $\alpha \in \mathbb{R}$, is not very enlightening---$R_\alpha$ is simply a monotone increasing function of $\alpha$
that grows unbounded as $\alpha\to\infty$.
Therefore, 
we introduce a \emph{normalized Randi\'c function}:
\begin{equation} \label{norm-randic}
	\bar{R}_{\alpha}:=\frac{\sum_{uv\in E}(d_{u}d_{v})^{\alpha}}
	{\sum_{v\in V}(d_{v})^{2\alpha}}.
\end{equation}

The motivation for 
this definition 
is threefold.
First, by Proposition \ref{prop:randic-bound}, 
the spectral radius 
is an upper bound for $\bar{R}_{\alpha}$:
\begin{equation} \label{norm-randic-bound}
	\bar{R}_{\alpha} \le \tfrac{1}{2}\lambda(G),
 	\quad \forall \alpha\in\mathbb{R}.
\end{equation}
Thus, $\bar{R}_{\alpha}$ 
is a  
bounded continuous function of $\alpha \in \mathbb{R}$. 

Second, for a $k$-regular graph, where each vertex has the same degree $k$, the Randi\'c index is $R_\alpha =  |E| k^{2\alpha}$.
It is disappointing that this quantity depends on $\alpha$, namely, on how one chooses to scale the degree correlations, although the degree distribution in the graph is completely homogeneous. 
On the other hand, the normalized Randi\'c for regular graphs has the constant value $\bar{R}_\alpha = |E|/|V| = \frac{1}{2} k$, independently of $\alpha$.
 
Third, since $\bar{R}_{\alpha}$ is bounded, a particular value of $\alpha$ 
now naturally comes to the forefront, namely the value $\alpha=\alpha^*$ for which $\bar{R}_{\alpha}$ assumes its maximum. 
Note that $2\bar{R}_{0}=d_\mathrm{avg}$; 
so, the maximum value $\bar{R}_{\alpha^*}$ is bounded from below by half the average degree.
This and \eqref{norm-randic-bound} give $\bar{R}_{\alpha^*}$  a place within the inequalities \eqref{deg-lambda}:
\begin{equation}	\label{deg-lambda2}
	d_{\min}\le 
	d_\mathrm{avg} \le 2\bar{R}_{\alpha^*} \le \lambda(G)
	\le d_{\max}.
\end{equation}
Moreover, by \eqref{H} and \eqref{norm-randic-bound} we have a relation to graph entropy:
$$
	\log \bar{R}_{\alpha^*} \le \log \lambda(G)-1 = H(G) - 1.
$$

We shall in fact prove a more direct relation to entropy, as follows.
Consider two natural 
probability measures on the edge set $E$ and the vertex set $V$, respectively, defined by (cf. \eqref{eq:pe})
\begin{equation} \label{pe}
	\begin{aligned}
	p_{E}^{\alpha}(uv) &=\frac{(d_{u}d_{v})^{\alpha}}{\sum_{uv\in E}(d_{u}d_{v})^{\alpha}},  \qquad \\
	p_{V}^{\alpha}(v) &=\frac{(d_{v}^{2})^{\alpha}}{\sum_{v\in V}(d_{v}^{2})^{\alpha}}.
	\end{aligned}
\end{equation}
Associated with these measures are the entropy expressions
\begin{equation} \label{HEV}
\begin{aligned}
	H_{E}^{\alpha} & =-\!\!\sum_{uv\in E}p_{E}^{\alpha}(uv)\,\log p_{E}^{\alpha}(uv),  \; \\
	H_{V}^{\alpha} & =-\!\sum_{v\in V}p_{V}^{\alpha}(v)\,\log p_{V}^{\alpha}(v).
\end{aligned}
\end{equation}
With this notation, we can state the following result
as a fundamental relation between the entropies and the normalized Randi\'c function.

\begin{theorem}
The logarithm of the normalized Randi\'c function given by \eqref{norm-randic} satisfies
	\begin{equation}  \label{eq:lograndic}
		\frac{\partial}{\partial \alpha} \log \bar{R}_\alpha = 
		\frac{1}{\alpha} (\log \bar{R}_\alpha + H^\alpha_V - H^\alpha_E).
	\end{equation}
Consequently, at the point $\alpha=\alpha^*$ where $\bar{R}_{\alpha}$ reaches its maximum, 
\begin{equation}  \label{alphastar}
	\log \bar{R}_{\alpha^*} =  H^{\alpha^*}_E - H^{\alpha^*}_V .
\end{equation}	
\end{theorem}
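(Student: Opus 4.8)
The plan is to treat $\log\bar R_\alpha$ as the difference of two log-partition functions and exploit the standard statistical-mechanics identity relating the derivative of a log-partition function to an expectation, and the expectation in turn to a Shannon entropy. Writing the numerator and denominator of \eqref{norm-randic} as $R_\alpha=\sum_{uv\in E}(d_ud_v)^\alpha$ and $D_\alpha=\sum_{v\in V}(d_v^2)^\alpha$, I would first record the elementary differentiation formulas
\begin{equation*}
 \frac{\partial}{\partial\alpha}\log R_\alpha=\sum_{uv\in E}p_E^\alpha(uv)\log(d_ud_v),\qquad
 \frac{\partial}{\partial\alpha}\log D_\alpha=\sum_{v\in V}p_V^\alpha(v)\log(d_v^2),
\end{equation*}
which follow at once from $\tfrac{d}{d\alpha}e^{\alpha x}=x\,e^{\alpha x}$ together with the definitions \eqref{pe}. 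Each right-hand side is the expectation of $\log(d_ud_v)$ (respectively $\log d_v^2$) under the corresponding Gibbs-type measure $p_E^\alpha$ (respectively $p_V^\alpha$).

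The key step is to re-express these expectations through the entropies \eqref{HEV}. Taking logarithms of the definitions \eqref{pe} gives $\log p_E^\alpha(uv)=\alpha\log(d_ud_v)-\log R_\alpha$ and $\log p_V^\alpha(v)=\alpha\log(d_v^2)-\log D_\alpha$. Averaging the first against $p_E^\alpha$ and using $\sum_{uv}p_E^\alpha(uv)=1$ yields $-H_E^\alpha=\alpha\sum_{uv}p_E^\alpha(uv)\log(d_ud_v)-\log R_\alpha$, hence
\begin{equation*}
 \sum_{uv\in E}p_E^\alpha(uv)\log(d_ud_v)=\frac{\log R_\alpha-H_E^\alpha}{\alpha},
\end{equation*}
with the analogous identity $\sum_{v}p_V^\alpha(v)\log(d_v^2)=(\log D_\alpha-H_V^\alpha)/\alpha$ for the vertex measure. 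Substituting both into $\frac{\partial}{\partial\alpha}\log\bar R_\alpha=\frac{\partial}{\partial\alpha}\log R_\alpha-\frac{\partial}{\partial\alpha}\log D_\alpha$ and collecting terms gives exactly \eqref{eq:lograndic}.

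For the consequence \eqref{alphastar}, I would note that $\bar R_\alpha$ is a smooth, strictly positive function of $\alpha$, bounded above by \eqref{norm-randic-bound}; together with its limiting behaviour as $\alpha\to\pm\infty$ (the ratio is then governed by the extreme values of $d_ud_v$ against $d_v^2$, both of which force a finite limit), this ensures the maximum is attained at some finite $\alpha^*$. At an interior maximizer the derivative of $\log\bar R_\alpha$ vanishes, so the left-hand side of \eqref{eq:lograndic} is zero there; provided $\alpha^*\neq0$ the factor $1/\alpha^*$ cancels, the bracket must vanish, and $\log\bar R_{\alpha^*}=H_E^{\alpha^*}-H_V^{\alpha^*}$ follows.

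The only real subtlety, and the place I would take care, is the apparent singularity at $\alpha=0$ in \eqref{eq:lograndic}. Rather than divide, I would record the equivalent, singularity-free form $H_E^\alpha-H_V^\alpha=\log\bar R_\alpha-\alpha\,\frac{\partial}{\partial\alpha}\log\bar R_\alpha$, an identity valid for all $\alpha$ (including $\alpha=0$, where it reduces to the direct check $\log\bar R_0=\log(|E|/|V|)=H_E^0-H_V^0$). Evaluated at any critical point of $\bar R_\alpha$ the derivative term drops out and \eqref{alphastar} follows immediately, covering $\alpha^*=0$ as well. Beyond this bookkeeping the argument is entirely routine: the genuine conceptual content is just the Gibbs identity linking log-partition derivatives, expectations, and Shannon entropy.
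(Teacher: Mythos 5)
Your proof is correct and follows essentially the same route as the paper's: both differentiate the (log of the) normalized Randi\'c function and use the Gibbs-type identity $\sum p\log(\text{weight})^\alpha=-H+\log Z$ to convert the resulting expectations into the entropies $H_E^\alpha$ and $H_V^\alpha$; your organization via the two log-partition functions is just a cleaner bookkeeping of the paper's quotient-rule computation. Your extra care about the removable singularity at $\alpha=0$ is a small improvement over the paper, which passes over that point silently.
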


\begin{widetext}
\begin{proof}
Differentiating \eqref{norm-randic} with respect to $\alpha$, we have

\begin{align*}
 \frac{\partial \bar{R}_{\alpha}}{\partial\alpha} 
& =     \left[\sum_{E}\frac{(d_{u}d_{v})^{\alpha} \log(d_{u}d_{v})}{\sum_{E}(d_{u}d_{v})^{\alpha}}  -  \sum_{V}\frac{d_{v}^{2\alpha} \log(d_{v}^{2})}{\sum_{V}d_{v}^{2\alpha}}\right]  \times 
 \frac{\sum_{E}(d_{u}d_{v})^{\alpha}}{\log e \sum_{V}d_{v}^{2\alpha}}  
\\
&   = \frac{1}{\alpha \log e} \left[\sum_{E}\frac{(d_{u}d_{v})^{\alpha} \log(d_{u}d_{v})^{\alpha}}{\sum_{E}(d_{u}d_{v})^{\alpha}}-\sum_{V}\frac{d_{v}^{2\alpha} \log(d_{v}^{2\alpha})}{\sum_{V}d_{v}^{2\alpha}}\right]\bar{R}_{\alpha}
\end{align*}
By \eqref{HEV}, the first term inside brackets equals
\[
	\sum_{E}\frac{(d_{u}d_{v})^{\alpha}}{\sum_{E}(d_{u}d_{v})^{\alpha}}\log(d_{u}d_{v})^{\alpha} = -H_{E}^{\alpha}+\log\sum_{E}(d_{u}d_{v})^{\alpha}.
\]
Similarly, the second term 
inside the brackets equals
$-H_{V}^{\alpha}+\log\sum_{V} d_{v}^{2\alpha}$.
Combining, 
\begin{align*}
\frac{\partial}{\partial\alpha}\bar{R}_{\alpha} 
 & =  \frac{1}{\alpha \log e}  \times 
 \left(H_{V}^{\alpha} \! - \! H_{E}^{\alpha}+\log\sum_{E}(d_{u}d_{v})^{\alpha} \! -\log\sum_{V}d_{v}^{2\alpha}\right)\! \bar{R}_{\alpha},
\end{align*}
which yields
\begin{equation*}
\frac{\partial}{\partial \alpha} (\log \bar{R}_\alpha)
=
 \log e \cdot \frac{1}{\bar{R}_\alpha} \frac{\partial}{\partial\alpha}\bar{R}_{\alpha}  = \frac{1}{\alpha} \left( H_{V}^{\alpha}-H_{E}^{\alpha}+\log\bar{R}_{\alpha}\right),
\end{equation*}
and proves \eqref{eq:lograndic}.
Since at the maximum value of $\log\bar{R}_{\alpha}$ (and hence of $\bar{R}_{\alpha}$) 
the left-hand side is zero, \eqref{alphastar} follows. 
\end{proof}
\end{widetext}

\begin{figure*}[tbh]
\centering
\includegraphics{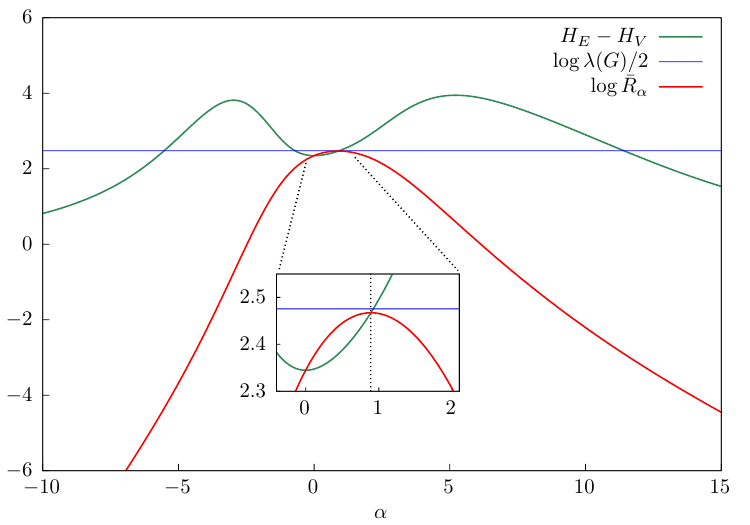}
\caption{Various entropic quantities calculated for a random graph $G$. The upper curve (green) is the difference of Shannon entropies $H_E^\alpha - H_V^\alpha$, and the lower curve (red) shows the logarithm of the normalized Randi\'c function $\bar{R}_\alpha$, as a function of $\alpha$. The horizontal line (blue) marks the value of $\log(\lambda(G)/2) = H(G)-1$, 
where $H(G)$ is the topological entropy of $G$, 
which is an upper bound for $\log \bar{R}_\alpha$. The inset shows a blowup of the region where the curves meet. The maximum value of $\bar{R}_\alpha$ is attained at $\alpha^* \approx 0.89$ (vertical line in the inset).\label{fig:entropy}
}
\end{figure*}

We give a numerical example
illustrating how various entropic concepts come together. 
In Fig.~\ref{fig:entropy} we plot the quantities  $\log \bar{R}_\alpha$ and $H_E^\alpha - H_V^\alpha$ as functions of $\alpha$, calculated for an Erd\H os-R\'enyi random graph on 50 vertices where the probability of an edge between a pair of vertices is $0.2$.  
It can be seen that at its maximum point $\alpha^*\approx 0.89$, $\log \bar{R}_\alpha$ comes very close to its theoretical upper bound $\log\lambda(G)/2$ given in \eqref{deg-lambda2}. 
Moreover, at this point $\log \bar{R}_\alpha$ equals $H_E^\alpha - H_V^\alpha$.

We note that
the curves $\log \bar{R}_\alpha$ and $H_E^\alpha - H_V^\alpha$ also intersect at $\alpha=0$, in addition to $\alpha^*$.
This follows from the fact that the probabilities $p_E^0$ and $p_V^0$ are uniform distributions over the edge and vertex sets; so
$H_E^0=\log|E|$, $H_V^0=\log|V|$, and 
$$\log \bar{R}_0 = \log(d_\mathrm{avg}/2)= H_E^0-H_V^0.$$ 
Moreover, 
$H_E^\alpha - H_V^\alpha$ has a local extremum at $\alpha=0$,
because the Shannon entropies $H_E^\alpha$ and $H_V^\alpha$ are maximum for uniform probability distributions, i.e.~for $\alpha=0$; 
so 
$ \frac{\partial}{\partial \alpha}(H_E^\alpha -H_V^\alpha) |_{\alpha=0}=0.$
Thus, the curves $\log \bar{R}_\alpha$ and $H_E^\alpha - H_V^\alpha$ have a reciprocal relationship at the two intersection points 
$\alpha=0$ and $\alpha = \alpha^*$:
at the former point $H_E^\alpha - H_V^\alpha$ and at the latter point the $\log \bar{R}_\alpha$ has a local extremum (inset in Fig.~\ref{fig:entropy}).
Consequently, for $\alpha \in (0,\alpha^*)$ the various entropic quantities are related by 
\[
	H_E^\alpha - H_V^\alpha < \log \bar{R}_\alpha < \log \lambda(G).
\]

\section{Numerical Results on real network data}

In this section we present calculations on real network data. 
We have chosen 13 networks that are undirected,
simple, unweighted, and connected, as we deal with such graphs in this paper. 
All networks are taken from \href{https://networkrepository.com}{\emph{The Network Repository}} \cite{nr-aaai15} 
and are listed in Table~\ref{tab:networks} using the same names as in the repository.
The prefixes refer to the various collections, namely, bio (biological networks), ca (collaboration networks),
email (email networks), ia (interaction networks), rt (retweet networks), road (road networks), soc (social networks), socfb (facebook networks), tech (technological networks), web (web graphs).

For each network data and its corresponding graph, we construct a BFD-ordered graph, as explained in Section~\ref{sec:bfd}.
In the construction steps, in case there are several candidates with the same degree for the next vertex, we always pick one that is not yet in BFD; this guarantees that the BFD-ordered graph is connected.
Since the degree sequence is not altered in the BFD construction, the network and its BFD version have identical degree information (columns 2--5 in the table). 
However, the Randi\'c  indices $R$ and the spectral radii $\lambda$ are significantly different.
We list the Randi\'c index $R_\alpha$ and the normalized Randi\'c index $\bar{R}_\alpha$  for values of $\alpha\in\{0.5,1,2\}$ in columns 6--11, and  calculate $\lambda(G)/2$ in the last column.

The calculations in Table \ref{tab:networks} illustrate several key points of the paper. 
Firstly, it is seen that the Randi\'c index $R$ can assume very large values in large networks, whereas the normalized Randi\'c $\bar{R}$ introduced in Section \ref{sec:norm-Randic} provides a more convenient magnitude.
Indeed, $R_\alpha$ increases monotonically with $\alpha$ without bound, as mentioned in Section \ref{sec:norm-Randic}, while the normalized index $\bar{R}_\alpha$ is bounded from above. 
The values in the table also demonstrate the upper and lower bounds on $\bar{R}_\alpha$ given in \eqref{norm-randic-bound} and \eqref{deg-lambda2}.
Finally, for each network, the BFD version keeps the degree sequence unchanged but markedly increases the Randi\'c indices as well as the graph entropy $\lambda(G)$, 
in accordance with Theorem \ref{theoremrandic}.

\begin{table*}[tbh]
\setlength{\tabcolsep}{1.5mm}
\begin{tabular}{cccccccccccc}
\toprule 
Network & $|V|$ & $|E|$ & $d_{avg}$ & $d_{\max}$ & $R_{0.5}$ & $\bar{R}_{0.5}$ & $R_{1}$ & $\bar{R}_{1}$ & $R_{2}$  & $\bar{R}_{2}$ & $\lambda(G)/2$\tabularnewline
\midrule 
soc-karate & \multirow{2}{*}{34} & \multirow{2}{*}{78} & \multirow{2}{*}{4.588} & \multirow{2}{*}{17} & 499.50 & 3.2019 & 3640 & 3.0033 & 2.704$\times10^{5}$ & 1.4009 & 3.3628\tabularnewline
BFD &  &  &  &  & 530.55 & 3.4010 & 4440 & 3.6634 & 4.625$\times10^{5}$ & 2.3963 & 3.7878\tabularnewline
\midrule 
soc-dolphins & \multirow{2}{*}{62} & \multirow{2}{*}{159} & \multirow{2}{*}{5.129} & \multirow{2}{*}{12} & 1029.6 & 3.2379 & 7313 & 3.3794 & 4.454$\times10^{5}$ & 3.0363 & 3.5968\tabularnewline
BFD &  &  &  &  & 1061.3 & 3.3373 & 8036 & 3.7135 & 5.726$\times10^{5}$ & 3.9034 & 3.9385\tabularnewline
\midrule 
road-chesapeake & \multirow{2}{*}{39} & \multirow{2}{*}{170} & \multirow{2}{*}{8.718} & \multirow{2}{*}{33} & 1894.1 & 5.5710 & 2.347$\times10^{4}$ & 5.2906 & 4.626$\times10^{6}$ & 2.0854 & 5.7479\tabularnewline
BFD &  &  &  &  & 1954.5 & 5.7486 & 2.627$\times10^{4}$ & 5.9216 & 6.791$\times10^{6}$ & 3.0610 & 6.1670\tabularnewline
\midrule 
socfb-Trinity100 & \multirow{2}{*}{2613} & \multirow{2}{*}{111996} & \multirow{2}{*}{85.722} & \multirow{2}{*}{404} & 1.359$\times10^{7}$ & 60.700 & 1.912$\times10^{9}$ & 65.959 & 5.230$\times10^{13}$ & 55.376 & 67.913\tabularnewline
BFD &  &  &  &  & 1.422$\times10^{7}$ & 63.510 & 2.222$\times10^{9}$ & 76.675 & 8.066$\times10^{13}$ & 85.396 & 89.129\tabularnewline
\midrule 
tech-routers-rf & \multirow{2}{*}{2113} & \multirow{2}{*}{6632} & \multirow{2}{*}{6.277} & \multirow{2}{*}{109} & 1.155$\times10^{5}$ & 8.7107 & 3.156$\times10^{6}$ & 11.005 & 4.991$\times10^{9}$ & 7.0690  & 13.835\tabularnewline
BFD &  &  &  &  & 1.382$\times10^{5}$ & 10.609 & 5.159$\times10^{6}$ & 18.009 & 1.309$\times10^{9}$ & 18.545  & 22.020\tabularnewline
\midrule 
web-polblogs & \multirow{2}{*}{643} & \multirow{2}{*}{2280} & \multirow{2}{*}{7.092} & \multirow{2}{*}{165} & 3.970$\times10^{4}$ & 8.7058 & 9.672$\times10^{5}$ & 8.2941 & 1.136$\times10^{9}$ & 1.2505 & 11.607\tabularnewline
BFD &  &  &  &  & 4.959$\times10^{4}$ & 10.893 & 1.774$\times10^{6}$ & 15.212 & 4.241$\times10^{9}$ & 4.6687  & 17.122\tabularnewline
\midrule 
soc-wiki-Vote & \multirow{2}{*}{889} & \multirow{2}{*}{2914} & \multirow{2}{*}{6.556} & \multirow{2}{*}{102} & 4.169$\times10^{4}$ & 7.1539 & 9.239$\times10^{5}$ & 8.7440 & 1.250$\times10^{9}$ & 4.5062 & 9.9626\tabularnewline
BFD &  &  &  &  & 4.827$\times10^{4}$ & 8.2826 & 1.380$\times10^{6}$ & 13.061 & 2.337$\times10^{9}$  & 8.4245 & 15.231\tabularnewline
\midrule 
rt-twitter-copen & \multirow{2}{*}{761} & \multirow{2}{*}{1029} & \multirow{2}{*}{2.704} & \multirow{2}{*}{37} & 6147.1 & 2.9869 & 6.147$\times10^{4}$ & 3.6352 & 1.725$\times10^{7}$  & 2.3429 & 4.5442\tabularnewline
BFD &  &  &  &  & 7945.6 & 4.4142 & 1.240$\times10^{5}$ & 7.4493 & 5.331$\times10^{7}$ & 7.2424 & 8.8488\tabularnewline
\midrule 
ia-infect-dublin & \multirow{2}{*}{410} & \multirow{2}{*}{2765} & \multirow{2}{*}{13.488} & \multirow{2}{*}{50} & 4.951$\times10^{4}$ & 8.9534 & 1.019$\times10^{6}$ & 9.8464 & 5.698$\times10^{8}$  & 8.8187 & 11.691\tabularnewline
BFD &  &  &  &  & 5.088$\times10^{4}$ & 9.2000 & 1.120$\times10^{6}$ & 10.817 & 7.709$\times10^{8}$  & 11.932 & 12.417\tabularnewline
\midrule 
email-univ & \multirow{2}{*}{1133} & \multirow{2}{*}{5451} & \multirow{2}{*}{9.622} & \multirow{2}{*}{71} & 9.152$\times10^{4}$ & 8.3946 & 1.970$\times10^{6}$ & 9.6681 & 1.508$\times10^{9}$  & 8.0208 & 10.374\tabularnewline
BFD &  &  &  &  & 9.995$\times10^{4}$ & 9.1682 & 2.562$\times10^{6}$ & 12.576 & 2.829$\times10^{9}$ & 15.044 & 15.963\tabularnewline
\midrule 
ca-netscience & \multirow{2}{*}{379} & \multirow{2}{*}{914} & \multirow{2}{*}{4.823} & \multirow{2}{*}{34} & 6344.5 & 3.4707 & 5.563$\times10^{4}$ & 3.7942 & 8.448$\times10^{6}$ & 2.2062 & 5.1877\tabularnewline
BFD &  &  &  &  & 7052.4 & 3.8580 & 8.135$\times10^{4}$ & 5.5484 & 2.210$\times10^{7}$ & 5.7717 & 7.0869\tabularnewline
\midrule 
bio-yeast & \multirow{2}{*}{1458} & \multirow{2}{*}{1948} & \multirow{2}{*}{2.672} & \multirow{2}{*}{56} & 9513.5 & 2.4419 & 6.463$\times10^{4}$ & 2.3276 & 7.430$\times10^{6}$  & 0.4235 & 3.7675\tabularnewline
BFD &  &  &  &  & 1.281$\times10^{4}$ & 3.7044 & 1.875$\times10^{5}$ & 6.8624 & 9.935$\times10^{7}$ & 5.6625 & 9.1127\tabularnewline
\midrule 
bio-celegans & \multirow{2}{*}{453} & \multirow{2}{*}{2025} & \multirow{2}{*}{8.940} & \multirow{2}{*}{237} & 4.500$\times10^{4}$ & 11.110 & 1.666$\times10^{6}$ & 10.259 & 8.204$\times10^{9}$ & 2.1491 & 13.154\tabularnewline
BFD &  &  &  &  & 5.064$\times10^{4}$ & 12.505 & 2.148$\times10^{6}$ & 13.228 & 9.905$\times10^{9}$ & 2.5947 & 16.687\tabularnewline
\bottomrule
\end{tabular}
\caption{Various network quantities calculated on several real networks and their BFD versions: 
Network size $|V|$, number of edges $|E|$, average and maximum degrees, the Randi\'c and the normalized Randi\'c index calculated for values of $\alpha=0.5,1,2$, and half the spectral radius of the graph. The minimum degree is 3 for road-chesapeake and 1 for all other networks.
\label{tab:networks}}
\end{table*}

\section{Conclusion}

Entropy, as a fundamental notion for quantifying information, complexity, and disorder, is a natural concept for complex networks. In this paper, we have established some fundamental relations between topological graph entropy, which quantifies the dynamical complexity  of processes on the network structure, the Randi\'c index from graph theory, which is related to degree assortativity, and BFD-ordering from theoretical computer science, which is an algorithm for traversing graph data structures. We have introduced a normalized version of the Randi\'c index that exhibits stronger connections to entropic quantities. These rigorous results are hoped to fuel further research in this direction for improving our understanding of complex networks.


\begin{acknowledgments} 

The authors thank Josef Leydold for fruitful discussions.

\end{acknowledgments}



%

\end{document}